\declaretheoremstyle[headfont=\normalsize\normalfont\bfseries,notefont=\mdseries,
notebraces={(}{)},bodyfont=\normalfont\itshape,postheadspace=0.5em]{italstyle}
\declaretheorem[style=italstyle,name=Theorem]{theorem}
\declaretheorem[style=italstyle,name=Lemma,sibling=theorem]{lemma}
\declaretheorem[style=italstyle,name=Proposition,sibling=theorem]{proposition}
\declaretheorem[style=italstyle,name=Question]{question}
\newcommand{\abs}[1]{\left|#1\right|}
\newcommand{\bd}{\partial}
\newcommand{\C}{\mathbb{C}}
\renewcommand{\d}{\mathrm{d}}
\newcommand{\id}{\mathrm{id}}
\newcommand{\norm}[1]{\left\lVert#1\right\rVert}
\newcommand{\R}{\mathbb{R}}
\newcommand{\set}[1]{\left\{#1\right\}}
\newcommand{\Z}{\mathbb{Z}}
\renewcommand\section{\@startsection{section}{1}{0pt}{-3.5ex \@plus -1ex \@minus -.2ex}{2.3ex \@plus.2ex}{\centering\itshape}}
\renewcommand{\subsection}{\@startsection{subsection}{2}\z@{.5\linespacing\@plus.7\linespacing}{-.5em}{\normalfont\itshape}}
\author{Dylan Cant}
\email{dylan@dylancant.ca}
\address{Institut de mathématique d'Orsay, Université Paris-Saclay, Bâtiment 307, rue Michel Magat, F-91405 Orsay Cedex, France}
\author{Jun Zhang}
\email{jzhang4518@ustc.edu.cn}
\address{The Institute of Geometry and Physics, University of Science and Technology of China, 96 Jinzhai Road, Hefei, Anhui, 230026, China}
\date{\today}
\begin{document}
\title{On the spectral capacity of submanifolds}
\begin{abstract}
  The infimum of the spectral capacities of neighbourhoods of a nowhere coisotropic submanifold is shown to be zero. In contrast, neighbourhoods of a closed Lagrangian submanifold, and of certain contact-type hypersurfaces, are shown to have uniformly positive spectral capacity. Along the way we prove a quantitative Lagrangian control estimate relating spectral invariants, boundary depth, and the minimal area of holomorphic disks. The Lagrangian control also provides novel obstructions to certain Lagrangian embeddings into a symplectic ball.
\end{abstract}
\maketitle

\section{Introduction}
\label{sec:introduction}

Let $(M^{2n},\omega)$ be a semiconvex symplectic manifold. Here \emph{semiconvexity} is a geometric assumption which states that the non-compact end of $M$, if non-empty, is modelled on $S_{+}Y\times T$ where $S_{+}Y$ is the positive half of the symplectization of a compact contact manifold and $T$ is a symplectically aspherical symplectic manifold (e.g., $T$ could be a point). For example, all compact symplectic manifolds are semiconvex, the open symplectic manifolds $\C^{n},T^{*}L$ are semiconvex, and if $M$ is semiconvex then so is the Cartesian product $M\times T^{2}$.

We will also require the following notion of semipositivity. Following, e.g., \cite{hofer-salamon-95,seidel-GAFA-1997,mcduff-salamon-book-2012}, we say that $(M,\omega)$ is \emph{semipositive} if any smooth sphere $u:S^{2}\to M$ satisfies:
\begin{equation*}
  \omega(u)>0\text{ and }c_{1}(u)\ge 3-n\implies c_{1}(u)\ge 0.
\end{equation*}
If $3-n$ is replaced by $2-n$, then we say $M$ is \emph{strongly semipositive}. It will be important for us to observe that, if $M$ is semiconvex and strongly semipositive, then $M\times T^{2}$ is semiconvex and semipositive.

If $M$ is semiconvex and semipositive, \cite{schwarz-pacific-j-math-2000,oh-2005-duke,frauenfelder-schlenk-IJM-2007,usher-compositio-2008} explain how to associate a \emph{spectral invariant} $c(H_{t},[M])$ to any compactly supported time-dependent Hamiltonian\footnote{We abuse notation and let the symbol $H_{t}$ represent a time-dependent Hamiltonian function $M\times \R/\Z\to \R$.} function $H_{t}$ on $M$ via a homological min-max process applied to a certain class in the \emph{Floer homology} of $H_{t}$. Similar invariants appear in \cite{viterbo-mathann-1992} using instead the theory of generating functions.

Given any open subset $U\subset M$ we define the \emph{spectral capacity} by:
\begin{equation*}
  c(U):=\sup\set{c([M],H_{t}):H_{t}\text{ has compact support in }U}.
\end{equation*}
Finally, for any other set $N$, we extend the spectral capacity by outer regularity, i.e.,  the infimum is over open sets $U$.
\begin{equation*}
  c(N)=\inf\set{c(U\subset M):U\text{ is an open neighbourhood of }N}
\end{equation*}
Such a quantity has appeared before, and is called the \emph{homological capacity} in \cite[\S3.3.4]{ginzburg-duke-2007} and the \emph{spectral width} in \cite{polterovich-rosen-book-2014}.

Our first theorem proves the spectral capacity vanishes for a large class of submanifolds, including all compact symplectic submanifolds of positive codimension:
\begin{theorem}\label{theorem:1}
  If $M$ is semiconvex and strongly semipositive and $S\subset M$ is a compact nowhere coisotropic submanifold then $c(S)=0$.
\end{theorem}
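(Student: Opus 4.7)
The plan is to reduce the theorem to showing that arbitrarily small neighbourhoods of $S$ have arbitrarily small Hofer displacement energy, and then to verify this geometrically using the nowhere coisotropic hypothesis. For $M$ semiconvex and semipositive, the spectral invariant satisfies the standard displacement--energy inequality $c([M],H_{t})\le e(\mathrm{supp}(H_{t}))$; taking suprema over $H_{t}$ compactly supported in $U$, and infima over open $U\supset S$, yields $c(S)\le \inf_{U\supset S}e(U)$. It therefore suffices to show that this infimum is zero.

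For the local displacement, at each $p\in S$ the nowhere coisotropic assumption produces $v\in(T_{p}S)^{\omega}\setminus T_{p}S$. Since $v\notin((T_{p}S)^{\omega})^{\omega}=T_{p}S$, the functional $\omega(v,\cdot)$ is nonzero on $(T_{p}S)^{\omega}$, so one can choose $v'\in(T_{p}S)^{\omega}$ with $\omega(v,v')=1$. The span $E_{p}$ of $v,v'$ is a two-dimensional symplectic subspace contained in $(T_{p}S)^{\omega}$, so $T_{p}M=E_{p}\oplus E_{p}^{\omega}$ symplectically and $T_{p}S\subset E_{p}^{\omega}$. A Darboux chart adapted to this splitting realises a small neighbourhood of $p$ in $S$ as lying inside a thin slab $B^{2}(\delta)\times W_{p}$, and a cut-off translation of the $B^{2}$-factor by more than $2\delta$ is a compactly supported Hamiltonian of Hofer norm $\lesssim \delta\cdot\mathrm{diam}(W_{p})$ that displaces this slab from itself. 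By shrinking the piece of $S$ considered near $p$, the parameter $\delta$ is free, so the local displacement energy can be made as small as we like.

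To globalize, cover the compact submanifold $S$ by finitely many (say $k$) such charts. Given $\epsilon>0$, shrink each slab so that its individual displacement energy is below $\epsilon/k$, and combine the local Hamiltonians by subadditivity of Hofer displacement energy for finite unions. This yields a single Hamiltonian diffeomorphism displacing a neighbourhood of $S$ with total Hofer norm at most $\epsilon$, so $\inf_{U\supset S}e(U)=0$, and therefore $c(S)=0$.

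The main technical obstacle I anticipate is precisely the subadditivity step: naively composing local displacements can fail, since one local flow may push part of $S$ into the support of the next. The standard remedies---sequencing the displacements with carefully enlarged, essentially disjoint supports, or first stabilising to $M\times T^{2}$ (which is semipositive by the \emph{strong} semipositivity hypothesis) and running the analogous argument there with the help of a product formula for spectral invariants, before pulling the capacity bound back to $M$---handle this, and the latter route is plausibly why the theorem is stated under the strong semipositivity assumption rather than mere semipositivity.
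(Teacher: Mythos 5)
There is a genuine gap, and it sits exactly where you flagged your ``main technical obstacle'': the globalization step does not work, and no amount of careful sequencing of local displacements will fix it. First, Hofer displacement energy is \emph{not} subadditive over finite unions: every sufficiently small piece of a Lagrangian torus $L\subset\C^{n}$ lies in a thin slab and is displaceable with arbitrarily small energy, yet $e(L)\ge\hbar(L)>0$ by Chekanov's theorem (indeed $c(L)\ge\hbar(L)$ is Theorem \ref{theorem:2} of this paper). Second, and more fundamentally, a compact nowhere coisotropic $S$ need not have neighbourhoods of small displacement energy \emph{in $M$ itself}. Your local argument produces a vector $v\in (T_{p}S)^{\omega}\setminus T_{p}S$ at each point, but patching these into a global displacing mechanism requires a nowhere vanishing section of $TS^{\perp\omega}$ that is nowhere tangent to $S$ (more precisely, one admitting a Lyapunov function, cf.\ Lemma \ref{lemma:key-idea}), and this can be obstructed topologically --- e.g.\ for a closed symplectic surface in a $4$--manifold with nonzero normal Euler number, $TS^{\perp\omega}$ is the normal bundle and has no nonvanishing section at all. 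This is precisely why the correct statement, due to G\"urel and Laudenbach--Sikorav, is that $S$ is only \emph{stably} infinitesimally displaceable: one passes to $S\times S^{1}\subset M\times T^{*}S^{1}$, where $T(S\times S^{1})^{\perp\omega}$ does admit a nonvanishing section, and Sullivan's criterion upgrades it to one with a Lyapunov function (Lemmas \ref{lemma:coisotropic-lyapunov} and \ref{lemma:key-idea}, giving Theorem \ref{lemma:coisotropic-sde}).

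Your closing remark correctly guesses that stabilization is the remedy and correctly identifies why strong semipositivity is assumed, but this is not a drop-in replacement for your first step: once the displacement happens only in $M\times T^{*}S^{1}$, the ordinary energy--capacity inequality in $M$ says nothing, and one must prove that $c(U\subset M)$ is bounded by the \emph{stable} displacement energy. This is the real content of the paper's proof (Theorem \ref{theorem:stable-displacement}): one compares $c([M],H_{t})$ with $c([M\times T_{a}],\rho(y)H_{t})$ via the K\"unneth formula for spectral invariants together with the deformation argument of Lemma \ref{lemma:technical-estimate} (adding a large $k\cos(\pi y/a)$ term so that the contractible orbits and their actions are unchanged when $H_{t}$ is cut off in the $y$--direction), and only then applies the energy--capacity inequality in $M\times T_{a}$. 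None of this machinery appears in your proposal, so as written the argument establishes the theorem only for those $S$ that happen to be infinitesimally displaceable in $M$, which is strictly weaker than the claim.
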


The proof is given in \S\ref{sec:proof-theorem-1}. The main idea appeals to the fact that $S$ is \emph{stably infinitesimally displaceable}, as proven by \cite{gurel-CCM-2008} and \cite{laudenbach-sikorav-IMRN-1994}. The strategy is to relate spectral invariants in $M$ with spectral invariants in $M\times T^{2}$.

It is well-known that, if $S$ can be displaced by Hamiltonian isotopies with arbitrarily small Hofer length then the energy-capacity inequality implies Theorem \ref{theorem:1}; see, e.g., \cite[\S5.5]{hofer-zehnder-book-1994}, and \cite{schwarz-pacific-j-math-2000,oh-2005-duke,frauenfelder-ginzburg-schlenk,ginzburg-2005-weinstein,usher-CCM-2010} for details on the energy-capacity inequality. Our argument follows a similar idea, and the key step is to generalize the energy-capacity inequality to work for \emph{stable displacement energy}; in Theorem \ref{theorem:stable-displacement} below, we prove that the stable displacement energy of a set bounds its spectral capacity from above.

\subsection{Note on relative capacities}
\label{sec:notational-conventions}

The rest of the introduction is concerned with other results about the spectral capacity. Many of the results involve introducing other capacities. In total, we will introduce three Floer theoretic capacities $c,\gamma,\beta$, and a Lagrangian capacity $\ell$.

These capacities are all relative capacities, in that they are always associated to pairs $(N, M)$, where $M$ is a symplectic manifold and $N\subset M$ is an arbitrary subset. All of the capacities we consider are first defined for open subsets and then extended to all sets by \emph{outer regularity}:
\begin{equation*}
  c(N, M)=\inf\set{c(U, M):U\text{ is an open neighborhood of }N}.
\end{equation*}
Each capacity is a functor valued in the category $(\R,\le)$ whose objects are the real numbers with a morphism $a\to b$ if and only if $a\le b$. Here a \emph{morphism of pairs} is a symplectomorphism $\varphi:M_{1}\to M_{2}$ so that $\varphi(N_{1})\subset N_{2}$. In particular, if $\varphi$ is a symplectomorphism, then:
\begin{equation*}
  c(N_{1}, M_{1})=c(\varphi(N_{1}), \varphi(M_{1})).
\end{equation*}
As the ambient space is typically clear from the context, we will use the abbreviation $c(N)=c(N, M)$.

\subsection{The spectral diameter versus the capacity}
\label{sec:spectr-diam-vers}

An important quantity related to the spectral capacity is the \emph{spectral norm}:
\begin{equation*}
  \gamma(\varphi_{t}):=c([M],H_{t})+c([M],\bar{H}_{t}),
\end{equation*}
where $H_{t}$ generates $\varphi_{t}$ and $\bar{H}_{t}=-H_{t}\circ \varphi_{t}$. One advantage of this quantity is that it is independent of the choice of the Hamiltonian function $H_{t}$, and depends only on $\varphi_{t}$. Analogously to the definition of $c(U)$, one can define:
\begin{equation}
  \gamma(U):=\sup\set{\gamma(\varphi_{t}):\varphi_{t}\text{ is supported in }U},
\end{equation}
extended to all subsets $S\subset M$ by outer regularity. Clearly $\gamma(S)\le 2c(S)$; thus Theorem \ref{theorem:1} implies $\gamma(S)=0$ holds whenever $S$ is a compact nowhere coisotropic submanifold.

\subsection{Lagrangian control for the spectral capacity}
\label{sec:contr-with-lagr}

Our next result bounds the spectral capacity of a compact Lagrangian $L$ in terms of the areas of holomorphic disks with boundary on $L$. Following, e.g., \cite{chekanov-duke-1998}, define: $$\hbar(L):=\sup\set{\hbar(L,J):J\in \mathscr{J}}$$ where $\hbar(L,J)$ is the minimal area of a $J$-holomorphic disk in $M$ with boundary on $L$, or $J$-holomorphic sphere in $M$. Here $\mathscr{J}$ is the set of \emph{admissible} almost complex structures on $M$, namely those which are $\omega$-tame and invariant under the Liouville flow in the noncompact end $S_{+}Y\times T$ (the Liouville flow acts only on the first factor). Let us note that if $J$ is admissible and $J'$ is $\omega$-tame and differs from $J$ only on a compact set, then $J'$ is also admissible. It follows that $\hbar(L)=\hbar(L')$ whenever $L,L'$ differ by an exact isotopy.

We will show:
\begin{theorem}\label{theorem:2}
  If $M$ is semiconvex and semipositive, and if $L\subset M$ is a compact Lagrangian submanifold, then $\hbar(L)\le c(L)$.
\end{theorem}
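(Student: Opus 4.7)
The plan is to show $c(U) \ge \hbar(L)$ for every open $U \supset L$; by outer regularity, this will imply $c(L) \ge \hbar(L)$. For this, I would exhibit, for each $a \in (0, \hbar(L))$, a compactly supported Hamiltonian $H$ in $U$ with $c([M], H) \ge a$, and then let $a \nearrow \hbar(L)$. The construction: by the Weinstein tubular neighborhood theorem, first shrink $U$ to match a neighborhood $\set{(q, p) \in T^{*}L : |p| < \delta}$ of the zero section; then pick a Morse function $f: L \to \R$ with $\min f = 0$, $\max f = a$, and a smooth bump $\rho: [0, \infty) \to [0, 1]$ with $\rho(0) = 1$, $\rho'(0) = 0$, and $\rho \equiv 0$ on $[\delta/2, \infty)$; set $H(q, p) := f(q)\, \rho(|p|^{2})$. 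Then $H$ is smooth, compactly supported in $U$, equals $f$ along $L$, and its constant $1$-periodic orbits on $L$ are exactly the critical points of $f$.

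The crucial step will be the lower bound $c([M], H) \ge a$. In a $C^{2}$-small regime one could compute $c([M], H) = \max_{L} f = a$ directly, since the Floer theory of $H$ reduces to Morse theory on $L$ near the zero section with the fundamental class represented by the maximum. To push this to the full regime $a < \hbar(L)$, I would invoke the quantitative Lagrangian control estimate announced in the abstract. Roughly, this estimate bounds $c([M], H)$ below in terms of the Lagrangian datum $\max_{L} H$, up to a boundary-depth error, provided that no $J$-holomorphic disk on $L$ of area less than $\hbar(L)$ can bubble off in the relevant Floer moduli spaces---a condition guaranteed by the hypothesis $a < \hbar(L)$. Arranging the boundary depth to be negligible, for instance by a suitable $C^{0}$-small modification of $H$ in the $|p|$-direction, would then give $c([M], H) \ge a$.

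The hard part will be the Lagrangian control estimate itself. Without control on disk areas, Floer continuation trajectories could bubble off low-area disks with boundary on $L$ and permit the fundamental class to be realized at action levels well below $\max_{L} H$; the hypothesis $a < \hbar(L)$ is designed precisely to rule this out via standard energy-area estimates combined with Gromov compactness. Granted the control estimate, the rest is a Morse-type spectral computation for the explicit model Hamiltonian above, together with the standard action identity for constant orbits, which together deliver $c([M], H) \ge a$ uniformly as $a \nearrow \hbar(L)$.
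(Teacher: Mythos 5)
Your overall plan---produce, for each $a<\hbar(L)$ and each neighbourhood $U$ of $L$, a compactly supported Hamiltonian $H$ in $U$ with $c([M],H)\ge a$, then conclude by outer regularity---is the same as the paper's, and the engine is indeed the Lagrangian control estimate (Lemma \ref{lemma:3}). The gap is in your choice of test Hamiltonian. The Lagrangian control estimate bounds $c([M],H_t)$ from below by $\int_0^1\min(H_t|_L)\,dt$, i.e.\ by the \emph{minimum} of $H$ along $L$, not by $\max_L H$. With your $H(q,p)=f(q)\,\rho(\abs{p}^2)$ and $\min f=0$, the estimate therefore yields only $c([M],H)\ge 0$, which is vacuous. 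Moreover, the principle you invoke to bridge the gap---a lower bound for $c([M],H)$ in terms of $\max_L H$---is false as a general statement: a Hamiltonian supported in a tiny ball meeting $L$ can have enormous $\max_L H$ while $c([M],H)$ is bounded above by the (tiny) displacement energy of that ball. Even granting the $C^2$-small Morse-theoretic computation $c([M],H)=\max_M H$, nothing you have indicated propagates it to the regime where $a$ is comparable to $\hbar(L)$.

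The fix is to discard the Morse function and take $H$ \emph{constant} on $L$: choose $H\ge 0$ supported in $U$ with $\max(H)=\min(H|_L)=A$ (so $H\equiv A$ on $L$ and $A$ is the global maximum) and $\min(H)=0$. Then $\max(H)-\min(H|_L)=0$, and by \eqref{eq:boundary-depth-bound} one has $\beta(H)\le\max(H)-\min(H)=A<\hbar(L)$, so the hypothesis of Lemma \ref{lemma:3} is satisfied and its conclusion reads $c([M],H)\ge\min(H|_L)=A$. Letting $A\nearrow\hbar(L)$ gives $c(U)\ge\hbar(L)$ for every neighbourhood $U$. Note also that with this normalization there is no need to make the boundary depth ``negligible'' by further modifications: the crude bound $\beta(H)\le A$ is exactly what is required.
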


Briefly, the idea is that for any Hamiltonian system one can try to define a version of the open-closed map on the Floer homology by counting half-infinite Floer cylinders with boundary on $L$, as shown in Figure \ref{fig:lagrangian-estimator}.

By counting the rigid such cylinders passing through a fixed point in $L$, we will conclude the following Lagrangian control\footnote{See \cite{polterovich-IMRN-1998,polterovich-book-2001,leclercq-zapolsky-JTA-2018,polterovich-shelukhin-compositio-2023} for variations on the Lagrangian control property.} property:
\begin{lemma}\label{lemma:3}
  If $H_{t}$ is a compactly supported Hamiltonian function on $M$ and:
  \begin{equation*}
    \beta(H_{t})+\int_{0}^{1}\max(H_{t})-\min(H_{t}|_{L})dt<\hbar(L),
  \end{equation*}
  then we have:
  \begin{equation*}
    c([M],H_{t})\ge \int_{0}^{1}\min(H_{t}|_{L})dt;
  \end{equation*}
  here $\beta(H_{t})$ is the boundary depth of \cite{usher-israel-j-math-2011}.
\end{lemma}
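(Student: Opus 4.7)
The plan is to define an ``evaluation'' map counting rigid Floer half-cylinders with boundary on $L$ passing through a generic fixed point $p\in L$, use an energy-action identity to extract the desired lower bound on the action of detected orbits, and then show that the count is topologically forced to be nonzero on any cycle representing $[M]$. Concretely, for each $1$-periodic orbit $y$ of $H_t$, let $\mathcal{M}(y;p)$ denote the moduli space of maps $u\colon[0,\infty)\times S^1\to M$ satisfying the Floer equation for $H_t$ with $u(0,t)\in L$, $u(0,0)=p$, and $\lim_{s\to\infty}u(s,\cdot)=y$. A short computation using the Floer equation and $\omega|_L=0$ yields the identity
\begin{equation*}
\mathcal{A}_H(y) \;=\; E(u) + \int_0^1 H_t(u(0,t))\,dt \;\geq\; \int_0^1 \min(H_t|_L)\,dt
\end{equation*}
for any $u\in\mathcal{M}(y;p)$ with its natural capping built from the half-cylinder and a disk on $L$. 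Counting the zero-dimensional components of these moduli spaces modulo $2$ defines a linear map $\mathcal{E}_p\colon CF_*(H_t)\to\mathbb{F}_2$.

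The crux is to show that $\mathcal{E}_p(\sigma)=1$ for every Floer cycle $\sigma$ representing $[M]$. I would proceed by comparing $\sigma$ with a PSS representative $\sigma_{\mathrm{PSS}}$ of $[M]$, which has $\max\mathcal{A}_H(\sigma_{\mathrm{PSS}})\leq\int_0^1\max(H_t)\,dt$. A standard gluing and neck-stretching analysis identifies $\mathcal{E}_p(\sigma_{\mathrm{PSS}})$ with the topological intersection number of $[M]$ and the point $\{p\}\subset L$, which equals $1\pmod 2$; bubbling of disks on $L$ or spheres in $M$ is excluded here because the relevant energies, bounded by $\int_0^1(\max(H_t)-\min(H_t|_L))\,dt$, are strictly less than $\hbar(L)$ by hypothesis. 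For any other cycle $\sigma$ representing $[M]$, write $\sigma-\sigma_{\mathrm{PSS}}=d\tau$; by Usher's boundary depth, one can choose $\tau$ with $\max\mathcal{A}_H(\tau)\leq\int_0^1\max(H_t)\,dt+\beta(H_t)$. Then the half-cylinders feeding into $\mathcal{E}_p$ from generators of $\tau$ have energy at most $\beta(H_t)+\int_0^1(\max(H_t)-\min(H_t|_L))\,dt<\hbar(L)$, so the usual $1$-parameter moduli cobordism argument, now free of bubbling, forces $\mathcal{E}_p(d\tau)=0$, and hence $\mathcal{E}_p(\sigma)=\mathcal{E}_p(\sigma_{\mathrm{PSS}})=1$.

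Putting these together, every cycle $\sigma$ representing $[M]$ must contain some generator $y$ with $\mathcal{E}_p(y)\neq 0$, forcing $\max\mathcal{A}_H(\sigma)\geq\int_0^1\min(H_t|_L)\,dt$; taking the infimum over cycles yields $c([M],H_t)\geq\int_0^1\min(H_t|_L)\,dt$. The main technical obstacle is the non-triviality calculation $\mathcal{E}_p(\sigma_{\mathrm{PSS}})=1$ together with the uniform exclusion of disk and sphere bubbling in the various $1$-parameter moduli spaces appearing in the chain-homotopy argument; this uniform bubbling control is precisely what the combined hypothesis on $\beta(H_t)$, $\max(H_t)$, $\min(H_t|_L)$, and $\hbar(L)$ provides.
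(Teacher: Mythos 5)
Your proposal is correct and follows essentially the same route as the paper: an evaluation map counting half-infinite Floer cylinders with boundary on $L$ through a fixed point, the energy--action identity giving the lower bound $\mathcal{A}(y)\ge\int_0^1\min(H_t|_L)\,dt$ on detected orbits, detection of the PSS representative via a parametric cobordism, and the boundary depth used to replace the primitive $\tau$ by one of action at most $\int_0^1\max(H_t)\,dt+\beta(H_t)$ so that the chain-map property (bubbling exclusion below $\hbar(L)$) applies to $\mathcal{E}_p(d\tau)$. The only cosmetic difference is that the paper phrases this as $\mathfrak{e}$ being a genuine chain map on the action subcomplex $\mathrm{CF}_{<\beta+E_++2\epsilon}$ and is slightly more careful about which cappings are counted (requiring $v\# u$ to have zero symplectic area), but the argument is the same.
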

The proof of Lemma \ref{lemma:3} is deferred to \S\ref{sec:proof-lemma-3}.

It is important to recall from \cite{usher-israel-j-math-2011,oh-JKMS-2009} that:
\begin{equation}\label{eq:boundary-depth-bound}
  \beta(H_{t})\le \int_{0}^{1}\max(H_{t})-\min(H_{t})dt.
\end{equation}
Note that \eqref{eq:boundary-depth-bound} is generalized to $\beta(H_{t})\le \gamma(H_{t})$ in \cite{kislev-shelukhin-GT-2021,feng-zhang-arXiv-2024}.

\begin{proof}[Proof of Theorem \ref{theorem:2}]
  for any $A<\hbar(L)$, and any neighborhood $U$ of $L$, one can find a function $H_{t}$ with:
  \begin{enumerate}[label=(\alph*)]
  \item $\max(H_{t})=\min(H_{t}|_{L})=A$,
  \item $\min(H_{t})=0$,
  \item $H_{t}$ is compactly supported in $U$,
  \end{enumerate}
  and so that the hypotheses of Lemma \ref{lemma:3} are satisfied, using \eqref{eq:boundary-depth-bound}. One therefore concludes $c(U)\ge \hbar(L)$, and taking the infimum over neighborhoods $U$ yields Theorem \ref{theorem:2}.
\end{proof}

\begin{figure}[h]
  \centering
  \begin{tikzpicture}[yscale=1]
    \draw (0,0) circle (0.3 and .6) +(0,-.6) coordinate (A) +(0,.6) coordinate(B) +(-0.3,0) node[left] {$\gamma$};
    \draw (8,0) circle (0.3 and .6) +(0,-.6) coordinate (C) +(0,.6) coordinate(D) +(0.3,0) node[right] {$L$};
    \draw (A)--(C)node[fill,circle,inner sep=1pt]{} (B)--(D);
    \path (A)--coordinate(E)(D);
    \node at (E) {$\bd_{s}u+J(u)(\bd_t u-X_t(u))=0$};
  \end{tikzpicture}
  \caption{A half-infinite Floer cylinder asymptotic to $\gamma$ with Lagrangian boundary conditions is used to estimate the spectral invariant.}
  \label{fig:lagrangian-estimator}
\end{figure}
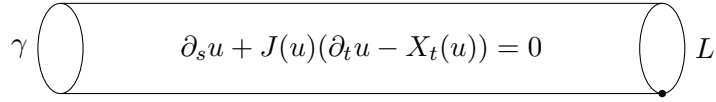

\subsubsection{Lagrangian and boundary depth capacities}
\label{sec:lagr-bound-depth}

We digress for a moment on two additional capacities. First, we introduce the \emph{Lagrangian-$\hbar$ capacity} of an open set $U\subset M$ to be:
\begin{equation*}
  \ell(U):=\sup \set{\hbar(L):L\subset U\text{ is a closed Lagrangian}}.
\end{equation*}
Second, we introduce the \emph{boundary depth capacity} of $U\subset M$ to be:
\begin{equation*}
  \beta(U):=\sup\set{\beta(H_{t}):H_{t}\text{ is supported in $U$}}.
\end{equation*}
Both capacities are extended to all subsets $N\subset M$ by outer regularity.

Lemma \ref{lemma:3} implies:
\begin{theorem}
  Suppose that $c(N)$ is finite. Then $\ell(N)\le \beta(N).$
\end{theorem}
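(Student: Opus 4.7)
The plan is to prove the contrapositive: assuming $\ell(N)>\beta(N)$, I will show $c(N)=\infty$. Choose any real number $A$ with $\beta(N)<A<\ell(N)$. By outer regularity of $\beta$, there is an open neighborhood $U_{0}$ of $N$ with $\beta(U_{0})<A$. Since $\ell$ is monotone under inclusion and $\ell(N)$ is defined as an infimum over open neighborhoods, we have $\ell(U_{0})\ge \ell(N)>A$; by definition of $\ell(U_{0})$ as a supremum, there exists a closed Lagrangian $L\subset U_{0}$ with $\hbar(L)>A$.

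From here I would run the same scheme as in the proof of Theorem \ref{theorem:2}, but with the a priori bound \eqref{eq:boundary-depth-bound} replaced by the sharper inequality $\beta(H_{t})\le \beta(U_{0})$, which holds for any $H_{t}$ supported in $U_{0}$ by definition of the boundary depth capacity. For each $B>0$ I would construct a time-independent, compactly supported function $H$ on $M$ with $\mathrm{supp}(H)\subset U_{0}$, $\max(H)=\min(H|_{L})=B$, and $\min(H)=0$; such a function exists by choosing a Weinstein tubular neighborhood of $L$ inside $U_{0}$ and taking a suitable nonnegative bump function that plateaus at the value $B$ along $L$. Then the integrand $\max(H)-\min(H|_{L})$ vanishes identically, so the hypothesis of Lemma \ref{lemma:3} reduces to $\beta(H)<\hbar(L)$, and this is guaranteed by the chain $\beta(H)\le \beta(U_{0})<A<\hbar(L)$. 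The lemma therefore yields $c([M],H)\ge \int_{0}^{1}\min(H|_{L})\,dt=B$, and hence $c(U_{0})\ge B$. Since $B$ was arbitrary, $c(U_{0})=\infty$, whence $c(N)=\infty$, as required.

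The conceptual content of the theorem is that boundedness of the spectral capacity forces the boundary depth capacity to be at least as large as the Lagrangian-$\hbar$ capacity, because otherwise Lemma \ref{lemma:3} produces Hamiltonians of arbitrarily large spectral invariant supported arbitrarily close to $N$. The argument is essentially formal once one notices that the boundary depth capacity of the ambient neighborhood can replace the cruder bound \eqref{eq:boundary-depth-bound} in the proof of Theorem \ref{theorem:2}, so I do not anticipate a serious technical obstacle; the only minor point is the routine construction of the cutoff function $H$.
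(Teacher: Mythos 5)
Your argument is essentially the paper's proof run in contrapositive form (the paper argues directly: for any neighborhood $U$ with $c(U)$ finite and any Lagrangian $L\subset U$, Lemma \ref{lemma:3} forces $\beta(U)\ge\hbar(L)$), and all the intermediate steps --- the choice of $A$, the existence of $U_{0}$ with $\beta(U_{0})<A$, the Lagrangian $L\subset U_{0}$ with $\hbar(L)>A$, the plateau Hamiltonian, and the reduction of the hypothesis of Lemma \ref{lemma:3} to $\beta(H)<\hbar(L)$ --- are correct. However, the final inference is backwards. Since $c(N)$ is defined by outer regularity as
\begin{equation*}
  c(N)=\inf\set{c(U):U\text{ is an open neighbourhood of }N},
\end{equation*}
you only have $c(N)\le c(U_{0})$; establishing $c(U_{0})=\infty$ for the \emph{particular} neighborhood $U_{0}$ tells you nothing about the infimum. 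To conclude $c(N)=\infty$ you must show that \emph{every} open neighborhood of $N$ has infinite spectral capacity, and your $L$ need not lie in smaller neighborhoods of $N$.

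The repair is short: given an arbitrary open neighborhood $U$ of $N$, apply your argument to $V=U\cap U_{0}$. One still has $\beta(V)\le\beta(U_{0})<A$ (monotonicity of the supremum defining $\beta$) and $\ell(V)\ge\ell(N)>A$ (since $\ell(N)$ is the infimum over all neighborhoods, including $V$), so there is a closed Lagrangian $L'\subset V$ with $\hbar(L')>A$, and the plateau construction inside $V$ gives $c(V)=\infty$, hence $c(U)\ge c(V)=\infty$ by monotonicity of $c$. As $U$ was arbitrary, $c(N)=\infty$. With this one-line fix your proof is complete and equivalent to the paper's.
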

\begin{proof}
  Pick any neighborhood $N\subset U$ small enough so that $c(U)$ is finite, and pick a Lagrangian $L\subset U$ and complex structure $J$. As above, choose a function $H_{t}$ so that $\max(H_{t})=\min(H_{t}|_{L})=A$.

  Provided $\beta(H_{t})$ is smaller than $\hbar(L)$, Lemma \ref{lemma:3} implies that $c(H_{t})\ge A$. Since $c(U)$ is finite, we cannot make $A$ arbitrarily large; therefore, for $A$ large enough, $\beta(H_{t})$ must be at least $\hbar(L)$. Thus $\beta(U)\ge \hbar(L)$. Taking the supremum over $L$ and then taking the infimum over neighborhoods $U$ yields the desired result.
\end{proof}

\subsubsection{Spectral norm and non-interfering Lagrangians}
\label{sec:spectr-norm-hamilt}

Let us call disjoint open sets $U_{1}, U_{2}$ \emph{non-interfering} provided:
\begin{equation}\label{eq:max-formula-beta}
  \beta(H_{1,t}+H_{2,t})=\max\set{\beta(H_{1,t}),\beta(H_{2,t})}
\end{equation}
whenever $H_{i,t}$ is supported in $U_{i}$. The max-formula \eqref{eq:max-formula-beta} for boundary depth is proved for various pairs $U_{1},U_{2}$ in \cite{ganor-tanny-barricades}; for instance, if $U_{1},U_{2}$ are disjoint Darboux balls in $\C^{n}$, then $U_{1},U_{2}$ are non-interfering.

Let us say that a pair of closed Lagrangians $(L_{1},L_{2})$ is \emph{non-interfering} provided there is a non-interfering pair of neighborhoods $U_{1},U_{2}$ so that $L_{i}\subset U_{i}$. It is important to note that we do not require the $U_{i}$ to be arbitrarily small neighborhoods of $L_{i}$.

For example, if $L_{1},L_{2}$ are contained in disjoint Darboux balls in $\C^{n}$, then $L_{1},L_{2}$ are non-interfering. Also note that being non-interfering is invariant under Hamiltonian isotopy $(L_{1},L_{2})\mapsto (\varphi(L_{1}),\varphi(L_{2}))$.

We consider the following variant of the Lagrangian capacity.\footnote{See \cite[pp.\,3]{hind-arXiv-2024} for related discussion of packing two Lagrangians in a ball.} For an open subset $U\subset M$ we define:
\begin{equation*}
  \ell_{2}(U):=\sup \set{\min\set{\hbar(L_{1}),\hbar(L_{2})}:L_{1}\sqcup L_{2}\subset U\text{ is non-interfering}},
\end{equation*}
and extend this to all sets by outer regularity. Then:
\begin{theorem}\label{theorem:2L}
  For any subset $N\subset M$, we have: $$2\ell_{2}(N)\le \gamma(N);$$ in other words, a pair of non-interfering Lagrangians bounds the spectral diameter from below.
\end{theorem}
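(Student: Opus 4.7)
By the outer regularity of both capacities it suffices to prove $2\ell_{2}(U)\le \gamma(U)$ for every open $U\subset M$. Fix such a $U$, a non-interfering pair $(L_{1},L_{2})$ with $L_{1}\sqcup L_{2}\subset U$ witnessed by non-interfering neighborhoods $V_{1},V_{2}$, and a constant $A<\min\set{\hbar(L_{1}),\hbar(L_{2})}$. Replacing each $V_{i}$ by $V_{i}\cap U$ does not break the non-interfering property (a Hamiltonian supported in $V_{i}\cap U$ is in particular supported in $V_{i}$), so we may and do assume $V_{1}\sqcup V_{2}\subset U$.

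The strategy is to build a single time-independent $H=H_{1}+H_{2}$ supported in $V_{1}\sqcup V_{2}\subset U$, and to apply Lemma \ref{lemma:3} twice: once to $H$ with $L_{1}$, yielding $c([M],H)\ge A$, and once to the conjugate Hamiltonian $\bar{H}_{t}=-H\circ \varphi_{t}$ with $L_{2}$, yielding $c([M],\bar{H})\ge A$. Summing produces $\gamma(\varphi_{t})\ge 2A$, and letting $A\nearrow \min\set{\hbar(L_{1}),\hbar(L_{2})}$ and taking the supremum over non-interfering pairs then gives the theorem.

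Concretely, choose $H_{1}$ supported in $V_{1}$ with $H_{1}\equiv A$ in a neighborhood of $L_{1}$ and $0\le H_{1}\le A$ everywhere, and $H_{2}$ supported in $V_{2}$ with $H_{2}\equiv -A$ in a neighborhood of $L_{2}$ and $-A\le H_{2}\le 0$. Because the two supports are disjoint and each $H_{i}$ is locally constant near $L_{i}$, the total flow $\varphi_{t}=\varphi_{t}^{H_{1}}\circ \varphi_{t}^{H_{2}}$ preserves both $V_{1}$ and $V_{2}$ setwise and restricts to the identity on $L_{1}\cup L_{2}$. Hence $H|_{L_{1}}=A$, $\max H=A$, $H|_{L_{2}}=-A$, and $\min H=-A$; moreover $\bar{H}_{t}=-(H_{1}\circ \varphi_{t})-(H_{2}\circ \varphi_{t})$ decomposes as a sum of Hamiltonians supported in $V_{1}$ and $V_{2}$ respectively (because $\varphi_{t}(V_{i})=V_{i}$), with $\bar{H}_{t}|_{L_{2}}=A$ and $\max \bar{H}_{t}=A$. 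Applying \eqref{eq:boundary-depth-bound} to each summand and then the max-formula \eqref{eq:max-formula-beta} gives $\beta(H)\le A$ and $\beta(\bar{H})\le A$, each strictly less than $\hbar(L_{1})$ and $\hbar(L_{2})$.

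With these estimates in hand, the hypothesis of Lemma \ref{lemma:3} for $(H,L_{1})$ reduces to $\beta(H)+A-A<\hbar(L_{1})$, which holds, giving $c([M],H)\ge A$; the hypothesis for $(\bar{H},L_{2})$ reduces analogously to $\beta(\bar{H})<\hbar(L_{2})$, giving $c([M],\bar{H})\ge A$. I expect the main technical obstacle to be bookkeeping for the second application: one must verify that the non-interference of $(V_{1},V_{2})$ survives conjugation by $\varphi_{t}$ and that Lemma \ref{lemma:3}'s hypothesis can be arranged for $\bar{H}$. Both issues are handled by choosing the $H_{i}$ locally constant near $L_{i}$, so that $\varphi_{t}$ fixes each $L_{i}$ and preserves each $V_{i}$; the rest of the proof is then routine.
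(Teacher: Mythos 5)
Your proposal is correct and follows essentially the same route as the paper: choose $H_{1}\ge 0$ equal to $A$ on $L_{1}$ and $H_{2}\le 0$ equal to $-A$ on $L_{2}$, supported in the non-interfering neighborhoods, bound $\beta(H)$ and $\beta(\bar H)$ by $A$ via \eqref{eq:boundary-depth-bound} and the max-formula \eqref{eq:max-formula-beta}, and apply Lemma \ref{lemma:3} to $(H,L_{1})$ and $(\bar H,L_{2})$. Your extra care in making the $H_{i}$ locally constant near $L_{i}$ (so that $\varphi_{t}$ fixes the Lagrangians and preserves the supports) is a harmless strengthening of the paper's setup, where the same conclusion already follows because each point of $L_{i}$ is a global extremum, hence a critical point, of $H_{i}$.
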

\begin{proof}
  Let $H_{1,t}\ge 0$ and $H_{2,t}\le 0$ be functions so that:
  \begin{equation*}
    \begin{aligned}
      &\max(H_{1,t})=\min(H_{1,t}|_{L_{1}})=A,\\
      &\min(H_{2,t})=\max(H_{2,t}|_{L_{2}})=-A,
    \end{aligned}
  \end{equation*}
  and suppose $H_{1,t},H_{2,t}$ are supported in non-interfering neighborhoods. Then, by the Hofer-norm upper bound to the boundary depth, applied to each function separately, we conclude:
  \begin{equation*}
    \beta(H_{1,t}+H_{2,t})\le A.
  \end{equation*}
  Therefore, if $A< \hbar(L_{1})$ and $A<\hbar(L_{2})$, we conclude:
  \begin{equation*}
    c([M],H_{1,t}+H_{2,t})\ge A\text{ and }c([M],\bar{H}_{1,t}+\bar{H}_{2,t})\ge A,
  \end{equation*}
  and hence $\gamma(H_{1,t}+H_{2,t})\ge 2A$. We can take $A$ any number smaller than $\min\set{\hbar(L_{1}),\hbar(L_{2})}$, and this yields the desired result.
\end{proof}

Theorem \ref{theorem:2L} and the result of \cite{alizadeh-atallah-cant-arXiv-2024} that $\gamma(B(1))=1$ implies:
\begin{theorem}\label{sec:cor-1-half}
  Let $L_{i}\subset \C^{n_{i}}$, $i=1,2$, be closed Lagrangians, and suppose that $L_{1}$ admits a nowhere-zero closed one-form and $n_{2}>0$. If $\varphi$ is a Hamiltonian diffeomorphism of $\C^{n_{1}+n_{2}}$ so that:
  \begin{equation*}
    \varphi(L_{1}\times L_{2})\subset B(1),
  \end{equation*}
  then $\hbar(L_{1}\times L_{2})<1/2$.

  In particular, if $n_{1}=n_{2}=1$, then we recover a special case of the result of \cite{viterbo-CRASP-1990,chekanov-math-z-1996,cieliebak-mohnke-inventiones-2018,hind-opshtein-CMH-2020} that $\varphi(\bd D(a_{1})\times \bd D(a_{2}))\subset B(1)$ holds for a Hamiltonian diffeomorphism $\varphi$ only if $\min\set{a_{1},a_{2}}<1/2$.
\end{theorem}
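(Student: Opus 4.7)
The plan is to apply Theorem~\ref{theorem:2L} to a non-interfering pair of Lagrangians inside the ball, each Hamiltonian isotopic to $\varphi(L_{1}\times L_{2})$. Set $N:=\varphi(L_{1}\times L_{2})$; Hamiltonian invariance of $\hbar$ gives $\hbar(N)=\hbar(L_{1}\times L_{2})$, and by compactness of $N$ and openness of $B(1)$ one may choose $a<1$ with $N\subset B(a)$.

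The main step is a \emph{doubling construction}: I would produce a Hamiltonian isotopy $\{\psi_{t}\}$ of $\C^{n_{1}+n_{2}}$, supported in a small tubular neighborhood of $N$ inside $B(a)$, with $\psi_{1}(N)\cap N=\emptyset$. The nowhere-zero closed one-form $\alpha$ on $L_{1}$ pulls back to a nowhere-zero closed one-form $\tilde{\alpha}:=(\varphi^{-1})^{*}\pi_{1}^{*}\alpha$ on $N$. Inside a Weinstein neighborhood of $N$, the graph of $t\tilde{\alpha}$ defines a Lagrangian isotopy from $N$ to a disjoint copy; the flux class $[\tilde{\alpha}]\in H^{1}(N)$ is the obstruction preventing this from being an ambient Hamiltonian isotopy. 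The hypothesis $n_{2}\ge 1$ supplies an auxiliary symplectic $\C$-factor inside $\C^{n_{2}}$ that plays the role of the stabilizing $\R^{2}$ (resp.\ $T^{*}S^{1}$) in the Laudenbach--Sikorav stable displacement construction; carrying it out along this factor cancels the flux and promotes the Lagrangian isotopy to a genuine ambient Hamiltonian isotopy with compact support in $B(a)$.

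Next, $(N,\psi_{1}(N))$ must be shown to be a non-interfering pair. Since they are disjoint compact subsets of $\C^{n_{1}+n_{2}}$, the plurisubharmonic barricade construction of \cite{ganor-tanny-barricades} provides disjoint open neighborhoods $U_{1}\supset N$ and $U_{2}\supset\psi_{1}(N)$ inside $B(a)$ for which \eqref{eq:max-formula-beta} holds. Taking an open $U\subset B(a)$ containing $N\sqcup\psi_{1}(N)$, Theorem~\ref{theorem:2L}, monotonicity of $\gamma$, and the conformal scaling $\gamma(B(a))=a\gamma(B(1))=a$ give
\begin{equation*}
  \hbar(L_{1}\times L_{2})=\min\{\hbar(N),\hbar(\psi_{1}(N))\}\le\ell_{2}(U)\le\tfrac{1}{2}\gamma(U)\le\tfrac{a}{2}<\tfrac{1}{2}.
\end{equation*}

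The main obstacle is the doubling construction: arranging for the displacing Hamiltonian to be compactly supported in the \emph{fixed} ball $B(a)$ and for the displaced copy to also lie in $B(a)$. The flux of the natural Lagrangian isotopy is generically nonzero, and cancelling it requires delicate use of the extra $\C^{n_{2}}$-direction; this is the essential joint use of the nowhere-zero one-form hypothesis on $L_{1}$ and $n_{2}\ge 1$.
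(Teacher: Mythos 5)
Your endgame (apply Theorem~\ref{theorem:2L} together with $\gamma(B(1))=1$ and scaling) matches the paper, but the route you take to produce a non-interfering pair inside the ball has two genuine gaps. First, the ``doubling construction'' is impossible as stated: a Hamiltonian isotopy supported in a small tubular neighbourhood of the closed Lagrangian $N$ lives, by the Weinstein neighbourhood theorem, in a disc cotangent bundle $D^{*}N$, and by the Gromov--Floer non-displaceability of the zero section in $T^{*}N$ no such isotopy can achieve $\psi_{1}(N)\cap N=\emptyset$. The flux $[\tilde{\alpha}]$ is not a technicality to be cancelled; it is the reason the graph-of-$t\tilde{\alpha}$ isotopy is only a Lagrangian isotopy. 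The Laudenbach--Sikorav stabilization you invoke uses an \emph{external} factor $T^{*}S^{1}$ glued onto the ambient manifold, not a $\C$-factor already sitting inside the Weinstein neighbourhood of $N$; there is no ``extra direction'' available once the support is constrained to a neighbourhood of $N$. Second, even granting a disjoint copy, disjointness of compact sets does \emph{not} yield non-interference: the remark immediately after Theorem~\ref{sec:cor-1-half} points out that $\bd D(a)$ and $\bd D(a+\epsilon)$ are disjoint yet interfering. The barricade results of \cite{ganor-tanny-barricades} apply to special configurations (e.g.\ disjoint Darboux balls), not to arbitrary disjoint neighbourhoods, so your appeal to them for $(N,\psi_{1}(N))$ is unsupported.

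The paper's proof sidesteps both issues by never displacing anything inside $B(1)$ and never requiring the displacement to be Hamiltonian. One takes the push-off $K_{0}=L_{1}^{\epsilon}\times L_{2}$ of $L=L_{1}\times L_{2}$ using the nowhere-zero closed one-form (so $K_{0}\cap L=\emptyset$, and $\hbar(K_{0})$ is close to $\hbar(L)$ for small $\epsilon$), and observes that the \emph{pair} $(L,K_{s})$ with $K_{s}=L_{1}^{\epsilon}\times(L_{2}+se_{1})$ is Hamiltonian isotopic as a pair for all $s$: the translation of the $L_{2}$-factor can be cut off away from $L$ precisely because $L_{1}^{\epsilon}$ is disjoint from $L_{1}$. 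For large $s$ the pair sits in disjoint Darboux balls, and since the definition of non-interfering Lagrangians allows the neighbourhoods $U_{i}$ to be large and is invariant under Hamiltonian isotopy of pairs, $(L,K_{0})$ is non-interfering. This is the joint use of the one-form hypothesis and $n_{2}>0$: the first makes $K_{0}$ disjoint from $L$, the second provides the direction $e_{1}$ along which to translate. Applying $\varphi$ then puts both Lagrangians in $B(1)$ and Theorem~\ref{theorem:2L} finishes the argument. If you want to salvage your write-up, replace the doubling construction and the barricade appeal with this ``Hamiltonian isotopy of pairs to disjoint balls'' argument.
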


\emph{Remark}. As explained in \cite{viterbo-CRASP-1990,chekanov-math-z-1996}, the result that a torus: $$\bd D(a_{1})\times \dots\bd D(a_{n})$$ does not admit an exact isotopy into the interior of a ball $B(a)$ if $a_{1}\ge na$ is due to unpublished work of Floer-Hofer concerning a product formula for the Ekeland-Hofer capacities.

\begin{proof}
  Let $L_{1}^{\epsilon}$ be a small push-off of $L_{1}$ using the nowhere zero closed one-form. The Lagrangians $L=L_{1}\times L_{2}$ and $K_{s}=L_{1}^{\epsilon}\times (L_{2}+se_{1})$ are Hamiltonian isotopic, as pairs, for all $s$, and $L$ and $K_{s}$ are contained in disjoint balls for $s$ sufficiently large. Thus $L$ and $K_{0}$ are non-interfering. By taking $\epsilon$ small enough, we can ensure that $\hbar(K_{0})$ is as close to $\hbar(L_{1}\times L_{2})$ as desired. Thus, if $\varphi(L_{1}\times L_{2})\subset B(1)$, then Theorem \ref{theorem:2L} implies:
  \begin{equation*}
    2\hbar(L_{1}\times L_{2})< \gamma(B(1))=1,
  \end{equation*}
  where we use \cite{alizadeh-atallah-cant-arXiv-2024} in the last equality. Using split almost complex structures implies $\hbar(L_{1}\times L_{2})\ge \min\set{h(L_{1}),\hbar(L_{2})}$ which completes the proof.
\end{proof}

\emph{Remark.} Theorem \ref{sec:cor-1-half} fails if $n_{2}=0$, in which case $L_{2}=\C^{0}=\mathrm{pt}$. Indeed, we have $\bd D(a)\subset D(1)$ for all $r<1$ and $\hbar(\bd D(a))=a$ can be made larger than $1/2$. This is because $\bd D(a)$ and $\bd D(a+\epsilon)$ are interfering.

\subsection{Contrast with the hypersurface case}
\label{sec:contr-with-hypersurface}

We also prove the following bound on the spectral capacity of certain contact-type hypersurfaces:

\begin{theorem}\label{theorem:5}
  Let $(M,\alpha)$ be a Liouville manifold, and let $N\subset M$ be a compact hypersurface so that $\ell$ restricts to $N$ as a contact form (we say that $N$ is of restricted contact type). Then $c(N)$ is bounded from below by the minimal period of the Reeb flow of $\alpha|_{N}$.
\end{theorem}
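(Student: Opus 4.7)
The plan is to show that for any open neighbourhood $U$ of $N$ and any $\epsilon > 0$, one has $c(U) \geq T_{\min} - \epsilon$; outer regularity of $c$ then yields Theorem~\ref{theorem:5}. Since $N$ is of restricted contact type, the Liouville vector field of $(M,\alpha)$ is transverse to $N$, and its time-$s$ flow embeds a collar $(-\delta_0, \delta_0) \times N \hookrightarrow M$ carrying $\omega$ to $d(e^s \alpha|_N)$, with $N$ at $\set{s=0} \times N$. By compactness of $N$, pick $\delta \in (0, \delta_0)$ small enough that $(-\delta, \delta) \times N \subset U$ and $T_{\min}(1 - e^{-\delta}) < \epsilon$.

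Construct a compactly supported autonomous Hamiltonian $H = h(s)$, where $h\colon \R \to [0, T_{\min} - \epsilon]$ is smooth, supported in $(-\delta, \delta)$, and equals $T_{\min} - \epsilon$ on a plateau around $s = 0$; after a small Morse perturbation, the plateau acquires a unique local maximum of value $T_{\min} - \epsilon$. A direct computation gives $X_H = e^{-s} h'(s) R_\alpha$, where $R_\alpha$ is the Reeb vector field of $\alpha|_N$, so the $1$-periodic orbits of $H$ split into constants on the critical set of $h$ (with top action $T_{\min} - \epsilon$ and bottom action $0$) and Reeb-type orbits at levels $s$ where $e^{-s} h'(s) = T_\gamma$ for some Reeb period $T_\gamma \geq T_{\min}$; such a Reeb-type orbit has action
\begin{equation*}
h(s) - e^s T_\gamma \leq (T_{\min} - \epsilon) - e^{-\delta} T_{\min} = T_{\min}(1 - e^{-\delta}) - \epsilon < 0.
\end{equation*}

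In particular, the highest-action generators in the Floer complex of $H$ are exactly the constants at the plateau maxima, with action $T_{\min} - \epsilon$; all other generators have strictly smaller action. These top generators cannot be Floer boundaries of anything of higher action (there is none), so they are automatically cycles, and a PSS continuation argument from the small Hamiltonian limit $H_r = rH$, $r \in [0,1]$, identifies them with the fundamental class~$[M]$. Therefore $c([M], H) = T_{\min} - \epsilon$, giving $c(U) \geq T_{\min} - \epsilon$; letting $\epsilon \to 0$ and then passing to the infimum over $U$ concludes the proof.

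The main technical obstacle is to rigorously identify the PSS image of $[M]$ with the plateau-maximum generator throughout the continuation, despite the birth of many lower-action Reeb-type orbits as $r$ grows. The key observation is that the Floer differential strictly decreases action, so no trajectory can connect a lower-action generator up to the plateau, and the class of the plateau maximum is thus preserved from the Morse regime. Compactness of the Floer moduli spaces on the non-compact end of $M$ is handled by the maximum principle for admissible almost-complex structures, which is standard in the semiconvex setting.
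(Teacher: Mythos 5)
There is a genuine gap: you have only accounted for half of the non-constant $1$-periodic orbits. A bump function $h$ supported in $(-\delta,\delta)$ with $\max h = T_{\min}-\epsilon$ necessarily has $|h'|\gg T_{\min}$ on \emph{both} its increasing and its decreasing part, and on the side where $h'(s)<0$ the flow of $X_H=e^{-s}h'(s)R_\alpha$ traverses closed Reeb orbits in the negative direction; these close up precisely when $e^{-s}|h'(s)|$ is a (positive) Reeb period $T_\gamma\ge T_{\min}$. By your own action formula their action is $h(s)-h'(s)=h(s)+e^{s}T_\gamma\ge e^{-\delta}T_{\min}$, which by your choice of $\delta$ is \emph{strictly greater} than $T_{\min}-\epsilon$. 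So the plateau maxima are not the top-action generators, the assertion that "they cannot be Floer boundaries of anything of higher action (there is none)" is false, and the min-max could a priori drop below $T_{\min}-\epsilon$ once the differential from these higher-action generators is taken into account. (Your remark that the differential cannot connect a \emph{lower}-action generator up to the plateau addresses the wrong direction: the danger is the plateau generator lying in the image of $d$ applied to a \emph{higher}-action generator.) A secondary point you would also need to address is that the Reeb-type orbits are degenerate (they come in $S^1$-families at least), so one must perturb and invoke spectrality/continuity.

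The paper's proof avoids exactly this trap by never identifying the carrier of the spectral invariant. It keeps the high-action orbits (its Case 1/Case 2 analysis explicitly lists the possibilities $a(\eta)\ge e^{-\delta}T$ and $a(\eta)\ge A-\delta+e^{-\delta}T$) and shows only that the action spectrum of $H=f_\delta(r)$ has a gap: every contractible orbit has action either $\le 0$ or $\ge A$, where $A<T_{\min}-\epsilon$. It then deduces $c([M],H)>0$ from non-negativity of $c([M],H)$, non-positivity of $c([M],\bar H)$, and non-degeneracy of the spectral norm, and concludes $c([M],H)\ge A$ by spectrality. If you want to salvage your approach, you should replace the "unique top generator" step by this gap-plus-nondegeneracy argument (or otherwise rule out that the backwards-Reeb orbits kill the plateau class, which is considerably harder).
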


This result is proved by a direct analysis of the spectrum of actions appearing in a certain specific systems supported near $N$. The idea of directly analyzing the spectrum of orbits is not new, and a more refined analysis in \cite{ginzburg-duke-2007} proves: {\itshape if $(M,\omega)$ is an aspherical symplectic manifold, and $N\subset M$ is a coisotropic submanifold which is stable and displaceable then the spectral capacity of $N$ is positive.} We include the proof of Theorem \ref{theorem:5} for the reader's convenience.

It is interesting to recall the following question posed in \cite[\S3.3.4]{ginzburg-duke-2007}:
\begin{question}[Ginzburg]\label{question:ginzburg}
  Does it hold that $c(\Sigma)>0$ for every closed hypersurface $\Sigma\subset \R^{2n}$?
\end{question}

\subsection{Beyond stable coisotropic submanifolds}
\label{sec:gener-gener-cois}

Most of the existing literature focus on the case when $N$ is \emph{stable} in the sense of \cite{bolle-CRAS-1996,bolle-math-Z-1998}; see, e.g., \cite{ginzburg-duke-2007} and \cite{dragnev-CPAM-2008,kerman-j-mod-dyn-2008,gurel-IMRN-2010,usher-israel-j-math-2011,kang-IMRN-2013,ginzburg-gurel-math-Z-2015}. For other works on coisotropic submanifolds which do not assume stability, see \cite{moser-acta-1978,hofer-PRS-edinburgh-1990,ziltener-IMRN-2017,lisi-rieser-jsg-2020}.

It is noteworthy that \cite{ginzburg-gurel-math-Z-2015} construct non-stable hypersurfaces $N$ which can \emph{leaf-wise displaced}\footnote{\emph{leaf-wise displaced} means no pair $(x,\varphi_{1}(x))$, $x\in N$, lies on the same leaf of the characteristic foliation of $N$} by Hamilotonian isotopies $\varphi_{t}$ with arbitrarily small Hofer length. This suggests that any positive solution of Question \ref{question:ginzburg} in the general case will not be based on leaf-wise intersection points.

However, the spectral capacity of the hypersurfaces constructed in \cite[pp.\,993]{ginzburg-gurel-math-Z-2015} is uniformly positive; indeed, the hypersurfaces contain a common Lagrangian, and one can then appeal to the lower bound $\ell(N)\le c(N)$.

Before ending this section, we mention a related question:
\begin{question}\label{question:8}
  Does it hold that $\ell(N)>0$ for every compact coisotropic submanifold $N$? In particular, does it hold that $\ell(\Sigma)>0$ for every closed hypersurface in $\Sigma\subset \R^{2n}$?
\end{question}
This is relevant because a positive answer to Question \ref{question:8} implies a positive answer to Question \ref{question:ginzburg}.

\subsubsection{Sets which contain Lagrangian submanifolds}
\label{sec:cont-lagr-subm}

One way to give a positive answer to Question \ref{question:8} for $N$ is to find a closed Lagrangian submanifold $L\subset N$. However, the following shows this is not always possible:
\begin{proposition}
  There exist closed starshaped hypersurfaces $N\subset \R^{4}$ which do not contain closed Lagrangians. Indeed, any closed starshaped hypersurface whose Reeb flow is ergodic does not contain any closed Lagrangian. In fact, we only require that the Reeb flow has a dense trajectory.\footnote{An ergodic flow has many dense trajectories; see, e.g., \cite[Lemma 11]{casals-spacil-j-topol-anal-2016}.}
\end{proposition}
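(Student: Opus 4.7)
The plan is to argue by contradiction: suppose $L\subset N$ is a closed Lagrangian and some Reeb orbit of $\alpha:=\lambda|_N$ is dense in $N$. The first key observation is that $L$ must be invariant under the Reeb flow. At a point $p\in L$, the characteristic line of $N$ is the symplectic $\omega$-orthogonal of $T_pN$ inside $T_p\R^4$, and is spanned by the Reeb vector $R(p)$. Since $T_pL\subset T_pN$ and $T_pL$ is Lagrangian, hence equal to its own symplectic orthogonal, this characteristic line lies inside $T_pL$, and so $R(p)\in T_pL$ for every $p\in L$.

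Next I would pin down the topology of $L$. The restriction $\beta:=\lambda|_L$ is closed, since $d\beta=\omega|_L=0$, and nowhere zero: if $\beta_p=0$ then $T_pL\subset \ker\alpha_p=:\xi_p$, and since both are $2$-dimensional subspaces of the $3$-dimensional $T_pN$ they would coincide, forcing $\omega|_{T_pL}=d\alpha|_{\xi_p}$ to be nondegenerate and contradicting the Lagrangian condition. Hence $\chi(L)=0$, so $L$ is diffeomorphic to a torus or a Klein bottle. Since $N$ is diffeomorphic to $S^3$, and the Klein bottle $K$ does not embed in $S^3$ --- Alexander duality would force $\tilde H_0(S^3\setminus K;\Z)\cong H^2(K;\Z)\cong \Z/2$, impossible because $\tilde H_0(\cdot;\Z)$ is always free abelian --- we conclude $L\cong T^2$.

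The final step is a separation argument. Alexander duality again gives $\tilde H_0(N\setminus L;\Z)\cong H^2(T^2;\Z)\cong \Z$, so $N\setminus L$ has exactly two nonempty open components $U_1, U_2$. Each is Reeb-invariant: for any $x\in U_i$ the continuous trajectory $t\mapsto \phi_t(x)$ remains in the invariant set $N\setminus L$, hence in the single component $U_i$. A Reeb orbit is the connected image of $\R$, so it lies in exactly one of the three invariant sets $L$, $U_1$, $U_2$, and in each case its closure is contained in a proper closed subset of $N$ --- either $L$ itself, or $\overline{U_i}\subset U_i\cup L$, which misses the nonempty $U_{3-i}$. This contradicts the hypothesis of a dense Reeb trajectory.

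I expect the main obstacle is not in the argument above, which is a direct synthesis of elementary symplectic and topological observations, but rather in the independent dynamical input: the existence of starshaped hypersurfaces in $\R^4$ whose Reeb flow admits a dense (or ergodic) trajectory. For this I would cite the construction referenced in the footnote, namely \cite{casals-spacil-j-topol-anal-2016}.
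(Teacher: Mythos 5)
Your proof is correct and follows essentially the same route as the paper's: the Reeb field is tangent to $L$ by the symplectic-orthogonality argument, $L$ separates $N\cong S^3$ into two invariant open pieces, and this is incompatible with a dense trajectory (and hence with ergodicity, via the footnote). The detour through $\chi(L)=0$ and excluding the Klein bottle is harmless but unnecessary — Alexander duality with $\Z/2$ coefficients already shows any closed surface in $S^3$ separates it.
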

Here we recall that a measure preserving dynamical system is \emph{ergodic} provided all invariant sets have either full measure or zero measure. The measure used for Reeb flows is the volume measure of the contact form.

\begin{proof}
  The argument is quite simple: any Lagrangian $L^{2}\subset N^{3}$ is a \emph{dividing hypersurface} because $N$ is diffeomorphic to a sphere. Moreover, by definition of the characteristic foliation, the Reeb vector field must be tangent to $L$. In particular $L$ divides $N$ into two disjoint invariant subsets, both of which have nonzero measure. This contradicts ergodicity. Since the sets are in fact open, this also contradicts the existence of a dense trajectory.
\end{proof}

As to the construction of such ergodic Reeb flows, we refer the reader to \cite[\S4.2]{casals-spacil-j-topol-anal-2016} which constructs an ergodic Reeb flow on the standard contact sphere $S^{2n+1}$ for every $n$ using \cite[Theorem A]{katok-izv-akad-nauk-1973}. The Reeb flow constructed using \cite[Theorem A]{katok-izv-akad-nauk-1973} is generated by a homogeneous Hamiltonian $H:\R^{2n}\setminus \set{0}\to \R$ which can be taken arbitrarily close to $H_{0}=\pi\abs{z}^{2}$ in the $C^{\infty}_{\mathrm{loc}}$ topology. In particular, the hypersurface $N=H^{-1}(1)$ is arbitrarily close to the standard 3-sphere, and $N$ does not contain a closed Lagrangian.

\emph{Remark.} There has recently been much work concerning closed invariant subsets in hypersurfaces in $\R^{4}$, e.g., \cite{fish-hofer-annals-2023,c-gardiner-prasad-arXiv-2024}. In general, if $X\subset N$ is a subset of a coisotropic submanifold, then we say $X$ is an \emph{invariant subset} provided $x\in X$ implies the entire characteristic leaf through $x$ is contained in $X$. We say $X$ is \emph{non-trivial} provided $X\ne \emptyset$ and $X\ne N$. It seems to be an interesting question whether a closed coisotropic submanifold $N^{d}$, with $d>n$, always contains a non-trivial closed invariant subset. This has been answered in the affirmative for $N^{3}\subset \R^{4}$ by \cite{fish-hofer-annals-2023}. This question generalizes the existence of a closed Lagrangian submanifold $L\subset N$, since $L$ is always a non-trivial closed invariant subset.\footnote{Note that if $d=n$ and $N^{d}$ is a connected coisotropic submanifold then there is a single leaf of its characteristic foliation.}

\subsubsection{Toric sets}
\label{sec:toric-sets}

An obvious class of coisotropic submanifolds for which Question \ref{question:8} has a positive answer are the \emph{toric sets}, i.e., those obtained as preimages $N=\mu^{-1}(\Gamma)$ of subsets $\Gamma$ under a moment map (assuming at least one value in $\Gamma$ is a regular value with non-empty fiber).

As a special case, if $\mu:\C^{n}\to \R^{n}$ is the standard moment map $\mu(z)=\pi\abs{z}^{2}$, then one easily deduces:
\begin{equation*}
  \hbar(\mu^{-1}(a_{1},\dots,a_{n}))\ge \min\set{a_{1},\dots,a_{n}},
\end{equation*}
and, in this setting,
\begin{equation}\label{eq:inequality}
  c(\mu^{-1}(\Gamma))\ge \ell(\mu^{-1}(\Gamma))\ge \max\set{\min\set{a_{1},\dots,a_{n}}:a\in \Gamma}.
\end{equation}
When $n=2$ and $\Gamma$ is the standard simplex (so $\mu^{-1}(\Gamma)=B(1)$ is the ball of capacity $1$), the first and last terms in \eqref{eq:inequality} differ by a factor of $n$. In \cite{swoboda-ziltener-JSG-2013} it is shown that $\ell(B(1))\ge 1/2$. One realizes this lower bound using the Lagrangian:
\begin{equation}\label{eq:special-lagrangian}
  L=\set{zq:z\in S^{1}\text{ and }q\in \bd B(1)\cap \R^{n}}
\end{equation}
considered in \cite{weinstein-book-1977,audin-CMH-1988,polterovich-math-Z-1991}.

On the other hand, our Theorem \ref{sec:cor-1-half} proves the upper bound $\hbar(L)<1/2$ whenever $L=\varphi(L_{1}\times L_{2})$ if $\varphi$ is a Hamiltonian diffeomorphism and $L_{1},L_{2}$ satisfy certain hypotheses. This suggests the question:
\begin{question}
  What is the exact value of $\ell(B(1))$?
\end{question}

If one considers the variant of the Lagrangian capacity which considers the rationality constants $\rho(T)$ of Lagrangian tori $T$ (see \cite{cieliebak-mohnke-inventiones-2018,pereira-arXiv-2022,gutt-pereira-ramos-arXiv-2022}), then, for a large class of domains $\Gamma$, it holds that:
\begin{equation}\label{eq:cm-pereira}
  \sup\set{\rho(T):T\subset \mu^{-1}(\Gamma)}=\max\set{\min\set{a_{1},\dots,a_{n}}:a\in \Gamma}.
\end{equation}
The equality \eqref{eq:cm-pereira} was proved when $\Gamma$ is the standard simplex in \cite{cieliebak-mohnke-inventiones-2018}; see also \cite{viterbo-CRASP-1990,chekanov-math-z-1996} and unpublished work of Floer-Hofer which proves \eqref{eq:cm-pereira} holds when $\Gamma$ is a simplex in terms of the capacities from \cite{ekeland-hofer-math-z-1990} provided one restricts the supremum to only those $T$ symplectomorphic to a Clifford torus (i.e., there is an ambient symplectomorphism of $\R^{2n}$ taking a Clifford torus to $T$).

In dimension $n=2$, the \cite{cieliebak-mohnke-inventiones-2018} show the rationality constant of any Lagrangian contained in the ball $B(1)$ is at most $1/2$ (such Lagrangians include tori and certain negatively curved non-orientable surfaces) .

In related work, \cite[\S7]{cote-JSG-2020} studies a refinement $\rho_{2}(T)$ of the rationality constants of tori which considers only the symplectic areas of disks with Maslov number $2$; \cite{cote-JSG-2020} proves the equality \eqref{eq:cm-pereira} in dimension $n=2$ when $\Gamma$ is a rectangle and with $\rho$ replaced by $\rho_{2}$; see also \cite{charette-AGT-2015}.

\subsection{The Hofer diameter of small open sets}
\label{sec:hofer-diameter-small}

It is well-known that the spectral norm of $\varphi_{t}$ is bounded from above by the Hofer norm of $\varphi_{t}$ computed in the universal cover. It is therefore natural to consider the following quantity:
\begin{equation}\label{eq:hofer-diameter-version}
  \sup\set{\norm{\varphi_{t}}_{\mathrm{Hofer}}:\varphi_{t}\text{ is supported in }U}
\end{equation}
as a Hofer analogue of $\gamma(U)$; here $U$ is open and:
\begin{equation*}
  \norm{\varphi_{t}}_{\mathrm{Hofer}}:=\inf\set{\int_{0}^{1}\max_{M} H_{t}-\min_{M} H_{t}d t:H_{t}\sim \varphi_{t}},
\end{equation*}
where $H_{t}\sim \varphi_{t}$ means the time-1 isotopy generated by $H_{t}$ is homotopic to $\varphi_{t}$ with fixed endpoints. However:
\begin{theorem}\label{theorem:7}
  For any non-empty open set $U$ in a compact semipositive symplectic manifold $M$, the quantity in \eqref{eq:hofer-diameter-version} is infinite.
\end{theorem}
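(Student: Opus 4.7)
The plan is to exhibit, for each $N>0$, a Hamiltonian isotopy $\varphi_t^{(N)}$ supported in $U$ whose universal-cover Hofer norm exceeds $N$. First I would reduce to a Darboux ball: since $U$ is non-empty and open in the compact manifold $M$, it contains a Darboux ball $B\subset U$, and any isotopy supported in $B$ is supported in $U$. I would then fix an autonomous Hamiltonian $H\in C^\infty_c(B)$ with $H\ge 0$, $\int_M H\,\omega^n>0$, and a non-degenerate maximum at a point $p\in B$, and consider the iterated family $\varphi_t^{(k)}:=\varphi_t^{kH}$, $k=1,2,\ldots$, which is supported in $B$.

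The lower bound on the Hofer norm should come from a Hofer-Lipschitz invariant $\mu:\widetilde{\mathrm{Ham}}(M)\to\mathbb{R}$ that is unbounded along the sequence $[\tilde\varphi^{(k)}]$. The natural candidate is the homogenized spectral invariant $\mu(\tilde\varphi):=\lim_{n\to\infty}c([M],\tilde\varphi^n)/n$, which defines a homogeneous Hofer-Lipschitz quasimorphism on $\widetilde{\mathrm{Ham}}(M)$ in the semipositive setting. By homogeneity, $\mu([\tilde\varphi^{(k)}])=k\,\mu([\tilde\varphi^H])$, so the task reduces to arranging $\mu([\tilde\varphi^H])\ne 0$ for some admissible $H$. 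When $(M,\omega)$ admits a Calabi quasimorphism in the sense of Entov-Polterovich (for example, when $QH^*(M)$ has a field-type summand), the restriction of $\mu$ to Hamiltonians supported in any displaceable subset coincides with $\mathrm{vol}(M)^{-1}\mathrm{Cal}$, which is strictly positive for $H$ as chosen. The Hofer-Lipschitz property of $\mu$ then yields $\norm{\varphi_t^{(k)}}_{\mathrm{Hofer}}\ge k\,\mu([\tilde\varphi^H])/C\to\infty$, completing the proof in this case.

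The hard part will be handling the general compact semipositive case, where no Entov-Polterovich Calabi quasimorphism is known to exist. In that generality I would work directly with the lifted action functional on the universal cover: the action of the constant orbit at $p$ for the class $[\tilde\varphi^{(k)}]$ is an invariant of the universal cover class and equals $kH(p)=k$ (with the sign convention of the paper). Comparing with the action at a second critical point of $H$ in $B$, or at a point in $M\setminus\mathrm{supp}(H)$, produces an action-difference invariant of $\widetilde{\mathrm{Ham}}(M)$ that grows linearly in $k$. The delicate issue is the ambiguity coming from different choices of capping disks, which contributes a term in the period lattice $\omega(\pi_2(M))$; this is trivially absent in the symplectically aspherical case and gives the bound $\norm{\varphi_t^{(k)}}_{\mathrm{Hofer}}\ge k$ directly. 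In general, I expect that passing to the homogenization of this action-difference invariant (analogous to the definition of $\mu$) washes out the period-lattice ambiguity and yields the required lower bound $\norm{\varphi_t^{(k)}}_{\mathrm{Hofer}}\ge ck-O(1)$. Managing this period-group ambiguity is the main obstacle and the step most dependent on the fine structure of the semipositive Floer package.
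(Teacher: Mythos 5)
Your overall strategy --- homogenize the spectral invariant, show it is Hofer--Lipschitz, and show it restricts to a multiple of the Calabi invariant on Hamiltonians supported in a small ball --- is exactly the paper's strategy, and your first branch is correct as far as it goes. The gap is that you only close the argument when $M$ admits an Entov--Polterovich Calabi quasimorphism (e.g.\ a field summand in quantum homology), whereas the theorem is asserted for every compact semipositive $M$; your fallback for the general case (action-difference invariants plus a homogenization meant to wash out the period lattice $\omega(\pi_{2}(M))$) is left as a sketch with an admitted obstacle and, as written, does not constitute a proof. Also, the parenthetical claim that $\lim_{n}c([M],\tilde\varphi^{n})/n$ is a quasimorphism in the general semipositive setting is not known and is not needed; only homogeneity along powers of a fixed element and the Lipschitz bound are used.

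The missing observation is that no quasimorphism structure is required. Work with $m(\varphi_{t})=-\lim_{k}c([M],H^{k}_{t})/k$, where $H^{k}_{t}$ is the mean-normalized generator of $\varphi^{k}_{t}$. The Hofer--Lipschitz bound \ref{M1} follows from the continuation-map estimate $c([M],G_{t})\ge\int_{0}^{1}\min G_{t}\,dt$ together with mean-normalization. For the Calabi property, let $G^{k}_{t}$ be the compactly supported generator of $\varphi^{k}_{t}$, still supported in the displaceable ball $B$; the energy--capacity inequality gives $c([M],G^{k}_{t})\le c(B)<\infty$ uniformly in $k$, so $\lim_{k}c([M],G^{k}_{t})/k=0$, and since $H^{k}_{t}=G^{k}_{t}-k\mu(H_{t})$ differs from $G^{k}_{t}$ by a time-dependent constant whose effect on $c$ is an exact shift, one gets $m(\varphi_{t})=\mathrm{Vol}(M)^{-1}\mathrm{Cal}(\varphi_{1})$ with no hypothesis on the quantum homology. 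There is likewise no capping or period-lattice ambiguity to manage: $c([M],\cdot)$ is defined by a min-max over the Novikov-completed complex and is already a single-valued invariant of the class in the universal cover. With these two points, taking $H\ge 0$ supported in a small displaceable Darboux ball inside $U$ with $\int_{M}H\,\omega^{n}>0$ finishes the proof exactly as you intended.
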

The argument is based on the existence of a measurement $m(\varphi_{t})$ satisfying:
\begin{enumerate}[label=(M\arabic*)]
\item\label{M1} $m(\varphi_t)\le \norm{\varphi_{t}}_{\mathrm{Hofer}}$,
\item\label{M2} $m(\varphi_{t})=\mathrm{Vol}(M)^{-1}\mathrm{Cal}(\varphi_{1})$ if $\varphi_{t}$ is supported in a displaceable Darboux ball,
\end{enumerate}
where $\mathrm{Cal}(\varphi)$ is the \emph{Calabi invariant} for Hamiltonian diffeomorphisms of an exact symplectic manifold; see, e.g., \cite{mcduff-salamon-book-2012}.

Such a measurement can be constructed as a homogenization of the spectral invariants of the mean-zero Hamiltonian $H_{t}$ generating $\varphi_{t}$. To be precise: let $H_{t}^{k}$ be the mean-zero Hamiltonian generating $\varphi_{t}^{k}$, and define:
\begin{equation}\label{eq:measurement}
  m(\varphi_{t})=-\lim_{k\to 0}\frac{c([M],H_{t}^{k})}{k}.
\end{equation}
We refer the reader to \cite{entov-poltero-IMRN-2003} for a similar quantity satisfying a similar Calabi property. The straightforward verification that $m$ satisfies \ref{M1} and \ref{M2} is recalled in \S\ref{sec:proof-theor-7}.

Since one can find systems supported in any ball with arbitrarily large Calabi invariant, Theorem \ref{theorem:7} easily follows easily from \ref{M1} and \ref{M2}.

It is interesting to ask whether Theorem \ref{theorem:7} holds if one does not work in the universal cover of the group of Hamiltonian diffeomorphisms. This is of course related to the long-standing open question whether the Hofer diameter of every compact symplectic manifold is infinite; see \cite{ostrover-CCM-2003,mcduff-CMH-2010} for related discussion.

It is also interesting to consider to what extent Theorem \ref{theorem:7} holds if one replaces $M$ by an open symplectic manifold. It cannot hold in general since the results of \cite{sikorav-pisa-1990} and \cite{hofer-zehnder-book-1994,burago-ivanov-polterovich-2008,brandenbursky-kedra-ann-math-que-2017,khanevsky-ziltener-dga-2022} show that:
\begin{proposition}
  Any compact set in $\R^{2}\times M$ with the product symplectic structure has a bounded Hofer diameter in the universal cover.\hfill$\square$
\end{proposition}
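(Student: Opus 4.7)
The strategy is the Sikorav commutator trick, adapted to the universal cover; see \cite{sikorav-pisa-1990,burago-ivanov-polterovich-2008,khanevsky-ziltener-dga-2022}. Let $K\subset \R^{2}\times M$ be compact. By enlarging $K$ if necessary, I may assume $K=B\times K'$, where $B\subset \R^{2}$ is a closed disk of radius $R$ and $K'\subset M$ is compact. It therefore suffices to bound the Hofer norm in the universal cover of $\varphi_{t}$ supported in a fixed open neighborhood $U=B^{\circ}\times V$ of $B\times K'$.

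The key observation is that $U$ is displaceable with displacement energy depending only on $R$: a translation by $(2R+1,0)$ of $\R^{2}$ is generated on a slightly larger disk by the Hamiltonian $(-2R-1)y$ cut off by a bump function, which, extended trivially in the $M$-direction, yields a compactly supported $G_{t}$ on $\R^{2}\times M$ whose time-one map $\psi$ satisfies $\psi(U)\cap U=\emptyset$ and $\int_{0}^{1}\max G_{t}-\min G_{t}\,dt\le E(R)$ depending only on $R$. Given any compactly supported $H_{t}$ with $\mathrm{supp}(H_{t})\subset U$ generating an isotopy $\varphi_{t}$, one considers the family of concatenated paths obtained by applying $\psi^{-1}$, then a conjugate of $\varphi_{t}$ (which, being supported in $\psi(U)$, commutes with $\varphi_{t}$ itself), then $\psi$, then $\varphi_{t}^{-1}$. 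The algebraic effect of the ensuing cancellation is to exhibit $\tilde\varphi_{t}$ as a product, in $\widetilde{\mathrm{Ham}}_{c}(\R^{2}\times M)$, of elements whose Hofer lengths are each controlled by $E(R)$ plus the Hofer norm of a fixed Hamiltonian loop obtained by rotating $B$ back onto itself inside a slightly larger disk in $\R^{2}$. Since this loop is contractible in $\widetilde{\mathrm{Ham}}_{c}(\R^{2}\times M)$ with bounded Hofer cost (the generating Hamiltonian is compactly supported on a fixed larger disk times $M$), the final bound depends only on $R$ and not on $H_{t}$.

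The main subtlety is precisely the bookkeeping in the universal cover: the Sikorav trick produces a Hamiltonian path ending at $\tilde\varphi$ only after one has verified that the loops inserted during the commutator rearrangement are null-homotopic in $\widetilde{\mathrm{Ham}}_{c}(\R^{2}\times M)$. This is where the non-compactness of $\R^{2}$ is essential, since it allows the requisite displacing isotopies and the canceling rotation loops to be realized by compactly supported Hamiltonians whose Hofer norms are uniformly controlled; the relevant verification is carried out in the cited references.
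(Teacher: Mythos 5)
There is a genuine gap: a single displacement of $U$ plus the commutator rearrangement cannot, by itself, yield a bound independent of $H_{t}$. Every version of that rearrangement (conjugating $\varphi_{t}$ by $\psi$, using that disjointly supported maps commute, and telescoping) leaves a residual factor whose Hofer length is comparable to the Hofer length of the generating Hamiltonian $H_{t}$, which is unbounded over maps supported in $U$; your sketch hides exactly this point in the sentence beginning ``The algebraic effect of the ensuing cancellation\dots''. A decisive sanity check: if a single displacing map $\psi$ with $\|\psi\|\le E(R)$ sufficed, the identical argument would apply to a displaceable Darboux ball $B$ in a \emph{compact} symplectic manifold (the displacing isotopy and the rotation loops are compactly supported there as well), and would show that $\mathrm{Ham}_{c}(B)$ has bounded Hofer diameter in the universal cover. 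This contradicts Theorem \ref{theorem:7} of the present paper, whose proof via the Calabi property \ref{M2} shows that diameter is infinite. So the mechanism you describe proves too much.

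The ingredient you are missing --- and the one the paper's argument explicitly rests on --- is that a precompact $U\subset\R^{2}\times M$ has \emph{infinite packing number}: one can find $\psi_{1},\psi_{2},\dots$ with the images $\psi_{i}(U)$ pairwise disjoint. It is this infinite supply of disjoint copies (impossible in a compact manifold, where the volume bounds the packing number) that kills the Calabi-type obstructions and lets one absorb the dependence on $H_{t}$; the single-displacement commutator trick only controls commutators and quasi-morphisms. Separately, you identify the universal-cover bookkeeping as the main subtlety, but it is in fact routine: all the relevant identities (pointwise products and conjugations of Hamiltonian paths) hold at the level of paths and hence descend directly to $\widetilde{\mathrm{Ham}}_{c}$, so no null-homotopy of inserted loops needs to be verified. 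The real subtlety is the one discussed above.
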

The argument uses the fact that any precompact open set $U$ is displaceable with infinite packing number, i.e., one can find Hamiltonian diffeomorphisms $\psi_{1},\psi_{2},\dots,$ of $\R^{2}\times M$ so that the images $\psi_{i}(U)$, $i=1,2,\dots$, are pairwise disjoint. See \cite{polterovich-shelukhin-compositio-2023} for further discussion of such packings.

\subsection{Contact geometry speculations}
\label{sec:contact-geometry-speculations}

It is natural to wonder which of these results has analogues in contact geometry. Let us briefly speculate on this.

Using spectral invariants from contact Floer cohomology, as in \cite{cant-arXiv-2023,djordjevic-uljarevic-zhang-arXiv-2023}, one can associate a spectral invariant $c_{\alpha}(\phi_{t})$ to any contact isotopy $\phi_{t}$ of the ideal boundary of a Liouville manifold $W$ whose symplectic cohomology is non-vanishing. The definition depends on choice of contact form $\alpha$ on the ideal boundary. One can then define capacities as above. However, these spectral invariants are unfortunately not invariant under the conjugation action of the contactomorphism group. The resulting capacities will generally fail to be invariant under the contactomorphism group.

One way to extract invariant measurements via contact Floer cohomology is to appeal to more discrete invariants. For example, \cite{cant-uljarevic-arxiv-2024} defines a functor from the category of domains in $Y$ to the category of vector spaces:
\begin{equation*}
  \Omega\subset Y\mapsto Q(\Omega)\in \mathrm{Vect}(\Z/2).
\end{equation*}
Moreover, every contactomorphism $\varphi$ of $Y$ which extends to a symplectomorphism of $W$ induces a natural isomorphism of this functor.

With this structure one can, e.g., define invariant measurements such as:
\begin{equation*}
  q(\Omega)=\text{rank of $Q(\Omega)\to Q(Y)$}.
\end{equation*}
One can extend such an invariant to all subsets of $Y$ by outer regularity, and it is interesting to wonder which submanifolds the resulting capacity is sensitive to.

For other approaches to defining invariant measurements in contact geometry see, e.g., \cite{givental-advsov-1990,eliashberg-JAMS-1991,eliashberg-kim-polterovich-2006,sandon-ann-inst-four-2011,zapolsky-IMRN-2013,albers-merry-JSG-2018,allais-arlove-arXiv-2023}

\subsection{Acknowledgements}
\label{sec:acknowledgements}

The authors wish to thank Yong Geun Oh, Georgios Dimitroglou Rizell, and Jungsoo Kang for illuminating discussions during the 2024 IBS-CGP MATRIX workshop in Pohang, South Korea, when the idea for this paper began. The authors also wish to thank Marcelo Atallah, Habib Alizadeh, Octav Cornea, Yaniv Ganor, Eric Kilgore, Rémi Leclerq, Egor Shelukhin, Igor Uljarević, and Michael Usher for useful conversations surrounding the topics in this paper. The first named author is supported in his research by funding from the ANR project CoSy. The second named author is partially supported by National Key R\&D Program of China No.~2023YFA1010500, NSFC No.~12301081, NSFC No.~12361141812, and USTC Research Funds of the Double First-Class Initiative.

\section{Proofs}
\label{sec:proofs}

\subsection{Proof of Theorem \ref{theorem:1}}
\label{sec:proof-theorem-1}

The proof has two main steps. The first step is to relate the spectral capacity to the stable displacement energy. The second step is to prove compact nowhere coisotropic submanifolds have neighborhoods with arbitrarily small stable displacement energy.

\subsubsection{Stable displacement energy}
\label{sec:stable-displ-energy}

We say that a compact set $K\subset M$ is \emph{stably displaceable} provided that: $$K\times S^{1}\subset M\times T^{*}S^{1}$$ is displaceable via a Hamiltonian isotopy. In this case, we say that the infimal Hofer length of an isotopy in $M\times T^{*}S^{1}$ displacing $K\times S^{1}$ is its \emph{stable displacement energy}, denoted $sde(K)$. For an open set $U$, we define:
\begin{equation*}
  sde(U)=\sup\set{sde(K):K\subset U\text{ compact}}.
\end{equation*}

The first goal in this section is to prove:
\begin{theorem}\label{theorem:stable-displacement}
  The spectral capacity $c(U)$ of an open set $U\subset M$ is bounded from above by the stable displacement energy $sde(U)$.
\end{theorem}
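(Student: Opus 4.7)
The plan is to reduce to the classical energy-capacity inequality applied in the stabilization $M\times T^{2}$. Since $M$ is semiconvex and strongly semipositive, the product $M\times T^{2}$ (with a chosen area form on $T^{2}$) is semiconvex and semipositive, so spectral invariants and the standard energy-capacity bound
\begin{equation*}
c([M\times T^{2}],\tilde H_{t})\le E(M\times T^{2};\,\mathrm{supp}(\tilde H_{t}))
\end{equation*}
are available. The goal is to lift each compactly supported $H_{t}$ on $M$ to a Hamiltonian $\tilde H_{t}$ on $M\times T^{2}$ whose support is displaceable in $M\times T^{2}$ with Hofer length close to $sde(\mathrm{supp}(H_{t}))$, and whose spectral invariant dominates $c([M],H_{t})$.

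For the lift and the displacement estimate: fix $H_{t}$ with compact support $K\subset U$ and $\epsilon>0$. By hypothesis $K\times S^{1}$ is displaceable in $M\times T^{*}S^{1}$ by a compactly supported isotopy of Hofer length at most $sde(K)+\epsilon$. Its support fits into $M\times V$, where $V$ is a tubular neighbourhood of some essential embedded circle $C\subset T^{2}$, once the area of $T^{2}$ is chosen large enough; extending by the identity, this isotopy displaces $K\times V$ in $M\times T^{2}$ with unchanged Hofer length. Now define $\tilde H_{t}(x,z)=H_{t}(x)\rho(z)$, where $\rho:T^{2}\to[0,1]$ is a smooth cutoff equal to $1$ on a smaller neighbourhood $V'\Subset V$ of $C$ and supported in $V$. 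Then $\mathrm{supp}(\tilde H_{t})\subset K\times V$, and the energy-capacity inequality on $M\times T^{2}$ gives $c([M\times T^{2}],\tilde H_{t})\le sde(K)+\epsilon$.

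The crucial remaining step, and the main obstacle, is the Künneth-type lower bound $c([M\times T^{2}],\tilde H_{t})\ge c([M],H_{t})$. Along the slice $M\times\{z_{0}\}$, for $z_{0}$ a maximum of $\rho$, the cutoff satisfies $\rho(z_{0})=1$ and $d\rho(z_{0})=0$, so $\tilde H_{t}$ agrees with $H_{t}$ and its Hamiltonian vector field is tangent to the slice; this should pick out a copy of the Floer complex of $H_{t}$ inside that of $\tilde H_{t}$. I would approach the inequality either (i) by a Morse--Bott Floer analysis near $M\times\{z_{0}\}$, decomposing the Floer complex of $\tilde H_{t}$ there as a tensor product and identifying the spectral cycle representing $[M\times T^{2}]$ with a lift of the one representing $[M]$ for $H_{t}$; or (ii) by deforming $\tilde H_{t}$ through genuinely product Hamiltonians $H_{t}\oplus\eta$ with $\eta$ a small Morse function on $T^{2}$, applying the standard Künneth formula $c(H_{t}\oplus\eta)=c([M],H_{t})+c([T^{2}],\eta)$, and letting $\norm{\eta}_{\infty}\to 0$ using continuity of $c$ in the Hofer topology. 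Once this lower bound is established, combining it with the previous step, taking the supremum over $H_{t}$ compactly supported in $K$ and over compact $K\subset U$, and letting $\epsilon\to 0$ yields $c(U)\le sde(U)$.
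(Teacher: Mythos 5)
Your overall strategy---stabilize, displace the stabilized support cheaply, apply the energy--capacity inequality in the product, and feed the result back to $M$ via a K\"unneth-type lower bound---is the same as the paper's. The gap is precisely the step you flag as the main obstacle: the inequality $c([M\times T^{2}],H_{t}\rho)\ge c([M],H_{t})$. Neither of your two suggested routes closes it as stated. Route (ii) fails because the deformation from $\tilde H_{t}=H_{t}(x)\rho(z)$ to a genuinely split Hamiltonian $H_{t}\oplus\eta$ is not small in the Hofer sense: on the region where $\rho<1$ the difference is of order $\max\abs{H_{t}}$, so Lipschitz continuity of $c$ only yields a bound off by roughly $\int_{0}^{1}\max H_{t}-\min H_{t}\,dt$, which destroys the estimate. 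Route (i) is undeveloped and delicate, since $X_{\tilde H}=\rho(z)X_{H_{t}}+H_{t}(x)X_{\rho}(z)$ has additional $1$-periodic orbits whose $T^{2}$-component moves along level sets of $\rho$, so the Floer complex of $\tilde H_{t}$ is not simply a copy of that of $H_{t}$ tensored with something, and new actions can enter the spectrum.

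The paper's mechanism for this step is the missing idea. One works on $M\times T_{a}$ with $T_{a}=\R/\Z\times\R/2a\Z$ and adds a large Hamiltonian $k\cos(\pi y/a)$ on the torus factor, then deforms $\rho(y)H_{t}+k\cos(\pi y/a)$ to $H_{t}+k\cos(\pi y/a)$: for $k$ large the contractible orbits of every Hamiltonian in the interpolation are confined to $M\times\R/\Z\times\set{0,a}$, where $\rho\equiv1$ and $\rho'\equiv0$, so the orbit set and its action spectrum are constant along the deformation and continuity plus spectrality force $c$ to be constant (Lemma \ref{lemma:technical-estimate}). The K\"unneth formula then applies to the genuinely split Hamiltonian $H_{t}+k\cos(\pi y/a)$, and the triangle inequality for the Poisson-commuting pair $\rho(y)H_{t}$ and $k\cos(\pi y/a)$ lets one cancel the common term $c([T_{a}],k\cos(\pi y/a))$ and conclude $c([M],H_{t})\le c([M\times T_{a}],\rho(y)H_{t})$ (Lemma \ref{lemma:spectral-capacity-stabilization}). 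Without some such pinning of the contractible-orbit spectrum, your argument does not go through; the rest of your proposal (the displacement of $K\times\Gamma$ inside the torus stabilization with Hofer length close to $sde(K)$, and the final limits) matches the paper.
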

A related result appears in \cite[\S6]{entov-polterovich-compositio-2009} and \cite[\S3.6]{borman-JSG-2012}, although our result is proved by relating spectral invariants in $M$ with $M\times T^{2}$ while \cite{borman-JSG-2012} uses $M\times S^{2}$.

The key will be the following result involving the torus: $$T_{a}=\R/\Z\times \R/2a\Z,$$ whose coordinates are labelled $x,y$.
\begin{lemma}\label{lemma:technical-estimate}
  Let $H_{t}$ be a compactly supported time-dependent Hamiltonian function on $M$, and let $\rho:\R/2a\Z\to \R$ be a smooth function so that $\rho(y)=1$ for $y$ in a neighborhood of the pair of antipodal points $\set{0,a}$. Then:
  \begin{equation*}
    c([M\times T_{a}],\rho(y)H_{t}+k\cos(\pi y/a))=c([M\times T_{a}],H_{t}+k\cos(\pi y/a))
  \end{equation*}
  holds for $k$ sufficiently large.
\end{lemma}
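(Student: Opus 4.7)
The plan is to interpolate between the two Hamiltonians and exploit the localization of their $1$-periodic orbits in the $T_{a}$-direction. Consider the family
\begin{equation*}
  K_{s}(m,x,y,t) = (s + (1-s)\rho(y))H_{t}(m) + k\cos(\pi y/a),\qquad s\in[0,1],
\end{equation*}
connecting the two Hamiltonians of the lemma. I will argue that (i) for $k$ sufficiently large, the contractible $1$-periodic orbits of $K_{s}$, together with their actions, are independent of $s$, and (ii) the spectral invariant $c([M\times T_{a}], K_{s})$ varies continuously in $s$ with values in a common measure-zero action spectrum, and is therefore constant.

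For the localization, neither $H_{t}$ nor $k\cos(\pi y/a)$ depends on $x$, so Hamilton's equation gives $\dot{y}\equiv 0$ and $y\equiv y_{0}$ along any orbit. The contractibility condition $\int_{0}^{1}\dot{x}\,dt = 0$ reads
\begin{equation*}
  (1-s)\rho'(y_{0})\int_{0}^{1}H_{t}(m(t))\,dt = \tfrac{k\pi}{a}\sin(\pi y_{0}/a).
\end{equation*}
Let $U$ be an open neighborhood of $\set{0,a}$ on which $\rho \equiv 1$, so $\rho'$ vanishes on $U$. For $y_{0}\notin U$ the right-hand side grows linearly in $k$ while the left-hand side is bounded uniformly in $s$ by a constant depending only on $\rho$ and $H$; thus for $k$ larger than some explicit threshold, no such $y_{0}$ is admissible. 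For $y_{0}\in U$ the equation reduces to $\sin(\pi y_{0}/a) = 0$, forcing $y_{0}\in\set{0,a}$. At these points $\rho(y_{0})=1$, so $K_{s}$ agrees with $H_{t}+k\cos(\pi y/a)$ in a neighborhood of every such orbit: the orbits are of the form $(m(\cdot),x_{0},y_{0})$ with $x_{0}\in\R/\Z$ and $y_{0}\in\set{0,a}$ for a $1$-periodic orbit $m$ of $H_{t}$, and their actions, computed via a vertical cap in $M$, are $A_{H_{t}}(m,\bar m) - k\cos(\pi y_{0}/a)$---all independent of $s$.

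Therefore the action spectrum $\Sigma$ of $K_{s}$ is $s$-independent and, by Sard's theorem, closed of measure zero, hence nowhere dense in $\R$. The map $s\mapsto c([M\times T_{a}], K_{s})$ is Lipschitz, with constant bounded by $\int_{0}^{1}\max\abs{(1-\rho)H_{t}}\,dt$; its image is a connected subset of $\R$ contained in $\Sigma$, and must therefore be a single point. The main technical subtlety is that $k\cos(\pi y/a)$ is not compactly supported on $M\times T_{a}$, so some care is needed in defining $c([M\times T_{a}], K_{s})$: one can either extend the spectral invariant to Hamiltonians compactly supported only in the $M$-direction (exploiting semiconvexity and a maximum principle at infinity), or work with compactly supported cutoffs and verify that the cutoffs do not alter the localized orbits or their actions for large $k$.
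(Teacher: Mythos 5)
Your proposal is essentially the paper's own proof: the same interpolating family, the same localization of contractible orbits to $y_{0}\in\set{0,a}$ via the $y$-component of Hamilton's equations for $k$ large, the same observation that $\rho\equiv 1$ near $\set{0,a}$ makes the orbits and their actions $s$-independent, and the same conclusion via continuity plus spectrality. One caveat: the step ``$c([M\times T_{a}],K_{s})$ takes values in the common action spectrum $\Sigma$, which is closed of measure zero'' is not automatic here, because the orbits come in $S^{1}$-families (degenerate), and for degenerate Hamiltonians spectrality is only known when the manifold is rational; moreover in the irrational case $\Sigma$ (taken over all cappings) is dense rather than closed. The paper flags exactly this and resolves it by perturbing $K_{s}$ in an $s$-independent way near the orbits so that nondegenerate spectrality applies; you should add that step (or a rationality hypothesis). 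Your remark about the non-compact support of $k\cos(\pi y/a)$ is legitimate but is already absorbed by the paper's conventions for semiconvex manifolds, since that term is constant on the conical end.
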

\begin{proof}
  The idea is to consider the 1-parameter family of Hamiltonian functions on $M\times T_{a}$, parametrized by $\tau\in [0,1]$, given by:
  \begin{equation*}
    G_{\tau}=(1-\tau+\tau\rho(y))H_{t}+k\cos(\pi y/a),
  \end{equation*}
  and to show that, for $k$ sufficiently large, the closed contractible orbits of the system $\Phi_{\tau,t}$ generated $G_{\tau}$ and their actions are independent of $\tau$. It will then follow from the continuity and spectrality\footnote{We should assume that $M$ is rational in order to appeal to spectrality. In \S\ref{sec:rati-assumpt-cont} we explain how to drop the rationality assumption.} properties for spectral invariants that $c([M\times T_{a}];G_{\tau})$ is independent of $\tau$.

  We compute:
  \begin{equation*}
    \d G_{\tau}=(1-\tau+\tau\rho(y))\d H_{t}+(\tau\rho'(y)H_{t}-\pi a^{-1} k\sin(\pi y/a))\d y.
  \end{equation*}
  Now we use the fact that $\rho'(y)=0$ for $y$ in a neighborhood of $\set{0,a}$. In particular, for $k$ sufficiently large,
  \begin{equation}\label{eq:requirement-on-k}
    \tau\rho'(y)H_{t}-\pi a^{-1} k\sin(\pi y/a)=0\iff y\in \set{0,a}.
  \end{equation}
  Since the symplectic vector field associated to $\d y$ is $\bd_{x}$, and $X_{H_{t}}$ is tangent to the level sets $M\times\set{(x,y)}$, it follows that:
  \begin{equation*}
    \text{contractible orbits of $\Phi_{\tau,t}$}=\set{(\gamma,x,y):
      \begin{aligned}
        &\gamma\text{ is an orbit of }X_{H_{t}},\\
        &x\in \R/\Z\text{ and }y\in \set{0,a}
      \end{aligned}
    }.
  \end{equation*}
  It remains to prove the actions of these contractible orbits are independent of $\tau$. In other words, it remains to prove that the integral of $G_{\tau,t}$ over the above orbits is independent of $\tau$. This follows immediately since we assume $\rho(y)=1$ in a neighborhood of $\set{0,a}$. This completes the proof.
\end{proof}

Using this result, and the K\"unneth formula for spectral invariants proved in \cite[Theorem 5.1]{entov-polterovich-compositio-2009} (see also \S\ref{sec:prod-form-spectr}), we obtain:
\begin{lemma}\label{lemma:spectral-capacity-stabilization}
  Let $H_{t},\rho$ be as in Lemma \ref{lemma:technical-estimate}. Then:
  \begin{equation*}
    c([M],H_{t})\le c([M\times T_{a}],\rho(y)H_{t}).
  \end{equation*}
  Consequently, the spectral capacity of $U\subset M$ is bounded from above by the spectral capacity of $U\times \Gamma\subset M\times T_{a}$ where $\Gamma=\set{y=0}\cup \set{y=a}$.
\end{lemma}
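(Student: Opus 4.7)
My plan is to combine Lemma \ref{lemma:technical-estimate} with the Künneth formula of \cite[Theorem 5.1]{entov-polterovich-compositio-2009} and the triangle inequality for spectral invariants. The key observation is that $H_t$, regarded as a function on $M\times T_a$, is constant in the $T_a$-direction while $k\cos(\pi y/a)$ is constant in the $M$-direction, so the sum $H_t+k\cos(\pi y/a)$ is split and accessible to Künneth. Moreover, the Hamiltonian flow of $\rho(y)H_t$ preserves the $y$-coordinate, hence Poisson-commutes with the autonomous Hamiltonian $k\cos(\pi y/a)$; this makes the composition product $\rho(y)H_t\#k\cos(\pi y/a)$ coincide with the pointwise sum, which is what allows the triangle inequality to be stated additively.

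First I will apply Lemma \ref{lemma:technical-estimate} to replace
\begin{equation*}
  c([M\times T_a],\rho(y)H_t+k\cos(\pi y/a))
\end{equation*}
by the analogous expression with $H_t$ in place of $\rho(y)H_t$, for $k$ sufficiently large. The latter is split, and Künneth identifies it with $c([M],H_t)+c([T_a],k\cos(\pi y/a))$. Next I will apply the triangle inequality to the decomposition $\rho(y)H_t+k\cos(\pi y/a)$, obtaining the upper bound
\begin{equation*}
  c([M\times T_a],\rho(y)H_t)+c([M\times T_a],k\cos(\pi y/a)),
\end{equation*}
and one more use of Künneth (with the trivial $M$-factor) identifies the isolated oscillator term as $c([T_a],k\cos(\pi y/a))$. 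Cancelling this term from both sides leaves $c([M],H_t)\le c([M\times T_a],\rho(y)H_t)$.

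For the consequence about open sets, observe that if $H_t$ has compact support in $U\subset M$ then $\rho(y)H_t$ is supported in $U\times\set{\rho\ne 0}$. The only constraint on $\rho$ is that it equal $1$ on a neighborhood of $\set{0,a}=\Gamma$, so I am free to take $\rho$ supported in an arbitrarily small neighborhood of $\Gamma$; then $\rho(y)H_t$ is compactly supported in any preassigned open neighborhood $V$ of $U\times\Gamma$. The inequality just established gives $c([M],H_t)\le c(V)$, and taking the supremum over such $H_t$ followed by the infimum over neighborhoods $V$ of $U\times\Gamma$ yields $c(U)\le c(U\times\Gamma)$, as claimed.

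The main technical obstacle I foresee is that $k\cos(\pi y/a)$ is not compactly supported on the possibly non-compact manifold $M\times T_a$, so invoking Künneth and the triangle inequality requires a little care. Since this function depends only on the coordinate on the compact factor $T_a$, inserting an $M$-direction cutoff which is constantly $1$ on a large set containing the support of $\rho(y)H_t$ and invoking continuity of spectral invariants (together with the rationality workaround alluded to in \S\ref{sec:rati-assumpt-cont}) should dispose of this point without disturbing the algebraic structure of the argument.
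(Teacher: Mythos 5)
Your proposal is correct and follows essentially the same route as the paper: apply Lemma \ref{lemma:technical-estimate} for large $k$, use the K\"unneth formula of \cite[Theorem 5.1]{entov-polterovich-compositio-2009} to split off $c([T_a],k\cos(\pi y/a))$, apply the triangle inequality to the Poisson-commuting pair $\rho(y)H_t$ and $k\cos(\pi y/a)$, and cancel the common oscillator term. Your additional remarks on deducing the statement about $U\times\Gamma$ and on the non-compact support of $k\cos(\pi y/a)$ are sensible elaborations of points the paper leaves implicit.
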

\begin{proof}
  We begin by using Lemma \ref{lemma:technical-estimate} (with $k$ taken large enough) and the K\"unneth formula from \cite[Theorem 5.1]{entov-polterovich-compositio-2009} to obtain:
  \begin{equation*}
    \begin{aligned}
      c([M],H_{t})+c([T_{a}],k\cos(\pi y/a))&=c([M\times T_{a}],H_{t}+k\cos(\pi y/a))\\
      &=c([M\times T_{a}],\rho(y)H_{t}+k\cos(\pi y/a)).
    \end{aligned}
  \end{equation*}
  Since $\rho(y)H_{t}$ and $k\cos(\pi y/a)$ are Poisson-commuting, we can apply the triangle inequality\footnote{We do not prove the well-known triangle inequality for spectral invariants in this paper; see, e.g., \cite{alizadeh-atallah-cant-arXiv-2023} for some discussion in the convex-at-infinity case.} for spectral invariants to obtain:
  \begin{equation*}
    \begin{aligned}
      &c([M\times T_{a}],\rho(y)H_{t}+k\cos(\pi y/a))\\
      &\hspace{2cm}\le c([M\times T_{a}],\rho(y)H_{t})+c([M\times T_{a}],k(\cos(\pi y/a))).\\
      &\hspace{2cm}= c([M\times T_{a}],\rho(y)H_{t})+c([T_{a}],k(\cos(\pi y/a))),
    \end{aligned}
  \end{equation*}
  where we have used \cite[Theorem 5.1]{entov-polterovich-compositio-2009} again in the last equality. Combining everything and cancelling the common term $c([T_{a}],k(\cos(\pi y/a)))$ yields the desired result.
\end{proof}

The final lemma used to prove Theorem \ref{theorem:stable-displacement} relates displacement energy in $M\times T^{*}S^{1}$ and $M\times T_{a}$, for $a$ sufficiently large.
\begin{lemma}\label{lemma:displacement-energy-Ta}
  Let $K\subset M$ be a compact set. The displacement energy of $K\times S^{1}$ in $M\times T^{*}S^{1}$ equals:
  \begin{equation*}
    \mathrm{inf}\set{\text{displacement energy of $K\times \Gamma_{a}$ in $M\times T_{a}$}:a>0},
  \end{equation*}
  where $\Gamma=\set{y=0}\cup \set{y=a}$. In particular, if $sde(K)<\epsilon$, then $K\times \Gamma_{a}$ has displacement energy in $M\times T_{a}$ less than $\epsilon$ for $a$ large enough.
\end{lemma}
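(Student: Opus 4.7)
The plan is to establish the asserted equality by proving both inequalities. The direction $\inf_a \leq sde(K)$ --- which delivers the ``in particular'' clause --- proceeds by transplantation. Take a compactly supported $H_t$ on $M \times T^*S^1$ displacing $K \times S^1$ with Hofer length close to $sde(K)$, and note that its support lies in $M \times S^1 \times [-R, R]$ for some $R$. For $a > 2R$, the projection $(x, p) \mapsto (x, p \bmod 2a)$ restricts to a symplectic embedding on $S^1 \times (-a, a)$ whose image is disjoint from $\set{y = a}$. Push $H_t$ forward via this embedding, and also via its translate $y \mapsto y + a$, to obtain $H^0_t, H^a_t$ on $M \times T_a$ with disjoint supports near $\set{y = 0}$ and $\set{y = a}$. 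Because $\max H_t \geq 0 \geq \min H_t$ (as $H_t$ is nontrivial and compactly supported), the disjoint-support sum $H^0_t + H^a_t$ has the same extrema as $H_t$, so the same Hofer length. Its flow preserves each strip and, on each, acts like the $T^*S^1$-flow of $H_t$ near $\set{p = 0}$, thereby displacing $K \times \set{y = 0}$ and $K \times \set{y = a}$ from themselves; cross-intersections are ruled out by strip separation.

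For the reverse direction $sde(K) \leq \inf_a$, lift a compactly supported $H_t$ on $M \times T_a$ along the symplectic covering $M \times T^*S^1 \to M \times T_a$ to a $2a$-periodic $\tilde H_t$, and cut off in $p$. The lifted flow has controlled $p$-coordinate on trajectories starting in $K \times \set{p = 0}$: Hamilton's equations give $\abs{\dot p} = \abs{\bd_x \tilde H_t} \leq \sup \abs{\bd_x H_t}$, so these trajectories remain in $\set{\abs{p} \leq C}$ for some explicit $C$. Choose $N_0 > \max(C, a)$ and a smooth bump $\rho$ equal to $1$ on $\abs{p} \leq N_0$ and vanishing outside $\abs{p} \leq N_1$. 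The compactly supported cutoff $\hat H_t := \tilde H_t \cdot \rho(p)$ has flow agreeing with the lifted flow on these trajectories, so $\hat \varphi_1(K \times \set{p = 0})$ projects to $\varphi_1(K \times \set{y = 0})$ in $T_a$, which is disjoint from $K \times \set{y = 0}$ by the displacement hypothesis. Hence $\hat H_t$ displaces $K \times S^1$ in $M \times T^*S^1$ with the same Hofer length as $H_t$.

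The main obstacle is the Hofer length bookkeeping in the cutoff step. Preserving the Hofer length requires two ingredients: the $2a$-periodicity of $\tilde H_t$ ensures $\max \tilde H_t$ and $\min \tilde H_t$ are attained inside $\abs{p} \leq a \leq N_0$, where $\rho = 1$; and the sign condition $\max H_t \geq 0 \geq \min H_t$ ensures that multiplication by $\rho \in [0,1]$ outside this region keeps the function values between $\min H_t$ and $\max H_t$. Once the Hofer length is controlled, a routine open/closed argument --- using that the lifted trajectories stay inside $\set{\abs{p} \leq C} \subset \set{\rho \equiv 1}$, where the Hamiltonian vector fields of $\tilde H_t$ and $\hat H_t$ coincide --- establishes the agreement of the two flows on the relevant trajectories.
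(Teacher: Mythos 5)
Your proposal is correct and is exactly the ``straightforward construction'' the paper alludes to (and defers to \cite[\S6]{entov-polterovich-compositio-2009}): wrapping the displacing isotopy into two disjoint strips of $T_{a}$ for one inequality, and lifting along the covering $M\times T^{*}S^{1}\to M\times T_{a}$ with a $p$-cutoff for the other. The Hofer-length bookkeeping via disjoint supports, $2a$-periodicity, and the sign normalization $\max H_{t}\ge 0\ge \min H_{t}$ is handled correctly.
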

\begin{proof}
  The proof is a straightforward construction. See also \cite[\S6]{entov-polterovich-compositio-2009}.
\end{proof}
Theorem \ref{theorem:stable-displacement} follows from Lemmas \ref{lemma:spectral-capacity-stabilization} and \ref{lemma:displacement-energy-Ta} and the well-known energy-capacity inequality applied to $M\times T_{a}$; see \cite{schwarz-pacific-j-math-2000,oh-2005-duke,frauenfelder-ginzburg-schlenk,ginzburg-2005-weinstein,usher-CCM-2010} for the proof of the energy-capacity inequality.

\begin{proof}[Proof of Theorem \ref{theorem:stable-displacement}]
  For any function $H_{t}$ with compact support $K\subset U$, Lemma \ref{lemma:displacement-energy-Ta} implies we can displace a neighborhood of $K\times \Gamma$ in $M\times T_{a}$ (for large enough $a$) by a Hamiltonian isotopy whose Hofer length is at most $sde(U)+\epsilon$, where $\epsilon$ is an arbitrarily small number. Then Lemma \ref{lemma:spectral-capacity-stabilization} and the energy capacity inequality yield:
\begin{equation*}
  c([M],H_{t})\le sde(U)+\epsilon.
\end{equation*}
Infimizing over $\epsilon$ and then supremizing over systems $H_{t}$ yields the desired conclusion.
\end{proof}

\subsubsection{Nowhere coisotropic submanifolds}
\label{sec:nowh-cois-subm}

The next result is:
\begin{theorem}\label{lemma:coisotropic-sde}
  Let $S\subset M$ be a compact nowhere coisotropic submanifold. For any $\epsilon>0$, $S$ has a neighborhood $U$ with $sde(U)< \epsilon$.\hfill$\square$
\end{theorem}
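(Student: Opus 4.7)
The plan is to construct, for each $\epsilon > 0$, a compactly supported Hamiltonian $K_\epsilon$ on $M \times T^*S^1$ supported in an arbitrarily small neighborhood of $S \times S^1$, with Hofer norm less than $\epsilon$, whose time-one flow displaces $S \times S^1$. Since displacement of the compact set $S \times S^1$ is an open condition, this implies displacement of $U \times S^1$ for some neighborhood $U$ of $S$ in $M$, yielding $sde(U) < \epsilon$. The construction is essentially the Laudenbach--Sikorav / Gurel stable displacement argument referenced in the introduction, which we sketch below.

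The starting point is the infinitesimal consequence of nowhere coisotropicity: for every $p \in S$, there exists a vector $v_p \in (T_pS)^\omega \setminus T_pS$. In a Weinstein tubular neighborhood $\mathcal{W}_p$ of $p$ in $M$, we construct a compactly supported Hamiltonian $H_p$ whose Hamiltonian vector field $X_{H_p}$ restricted to $S \cap \mathcal{W}_p$ is nowhere tangent to $S$. By scaling $H_p$ down and shrinking $\mathcal{W}_p$, one can make $\norm{H_p}_{\mathrm{Hofer}}$ arbitrarily small while preserving this transversality. The obstruction to patching these local vector fields into a single global Hamiltonian directly displacing $S$ in $M$ is topological: the symplectic normal bundle $(TS)^\omega / ((TS)^\omega \cap TS)$ need not admit a nowhere-zero global section.

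The $T^*S^1$ factor removes this topological obstruction. Take a finite cover $\{U_i\}_{i=1}^N$ of $S$ by such Weinstein charts with local Hamiltonians $H_i$ as above, and choose a partition of unity $\{\chi_i\}_{i=1}^N$ on $S^1$ subordinate to a cover of $S^1$ by small arcs. On $M \times T^*S^1 = M \times S^1 \times \R$ with coordinates $(m, \theta, \rho)$, set
\begin{equation*}
  K_\epsilon(m, \theta, \rho) = \sum_{i=1}^N \chi_i(\theta) H_i(m) + G(\rho),
\end{equation*}
where $G$ is a cut-off of a suitable linear function in $\rho$ chosen so that the $\theta$-coordinate under the flow of $K_\epsilon$ traverses all of $S^1$ during the interval $[0,1]$. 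Each orbit starting in $S \times S^1 \times \{0\}$ then encounters each local transverse push $X_{H_i}$ as $\theta$ sweeps across $\mathrm{supp}(\chi_i)$, and exits $S \times S^1 \times \{0\}$ before time one. The Hofer norm of $K_\epsilon$ is bounded by $\max_i \norm{H_i}_{\mathrm{Hofer}} + \norm{G}_{\mathrm{Hofer}}$, which can be driven below $\epsilon$.

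The main obstacle is the quantitative coordination of these parameters. The $\theta$-sweep must be fast enough to traverse every $\mathrm{supp}(\chi_i)$ within unit time (forcing a lower bound on $\norm{G}_{\mathrm{Hofer}}$ in terms of the size of the cover), while each $H_i$ must be strong enough to produce genuine transverse displacement during the short interval where $\chi_i$ is active (imposing a lower bound on $\norm{H_i}_{\mathrm{Hofer}}$ in terms of the sweep rate). The content of the Laudenbach--Sikorav / Gurel construction is to carry out this balancing so that all competing lower bounds can be driven simultaneously to zero; the essential input is that by taking arbitrarily narrow tubular neighborhoods of $S$, the required magnitude of transverse displacement shrinks accordingly, and the choices of $H_i$ and $G$ can be made so that their Hofer norms both go to zero with the diameter of the neighborhood.
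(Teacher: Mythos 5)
Your overall strategy (displace $S\times S^1$ in $M\times T^*S^1$ with small Hofer norm, then use openness to get a neighbourhood) matches the paper's, but the displacement mechanism you propose is different from the one the paper uses, and as written it has a genuine gap: you never show that the time-one map of $K_\epsilon$ actually moves $S\times S^1$ off itself. Sequencing the local pushes $X_{H_i}$ in time via the $\theta$-sweep does not remove the topological obstruction you correctly identify; it merely replaces a superposition of local vector fields by a composition of local flows, and these can still cancel. A point of $S$ can be pushed off $S$ while $\chi_1$ is active and pushed back onto $S$ while $\chi_2$ is active, since nothing in the construction forces the transverse directions chosen in overlapping charts to be coherent. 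The ``quantitative coordination'' you defer to Laudenbach--Sikorav and Gurel is not what those references supply --- their argument contains no such balancing --- so the key step is missing. (There are also smaller unaddressed points: the flow of $K_\epsilon$ does not preserve $\rho=0$, since $\dot\rho=-\sum_i\chi_i'(\theta)H_i$, so ``exits $S\times S^1\times\{0\}$ before time one'' is neither quite the right condition nor verified.)

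What the paper (following Gurel and Laudenbach--Sikorav) actually does is produce a single global autonomous Hamiltonian transverse to $S\times S^1$, after which no quantitative balancing is needed. The $T^*S^1$ factor is used only to guarantee that $T(S\times S^1)^{\perp\omega}$ admits a nowhere-zero section, namely $\bd_\theta$; Sullivan's criterion (Lemma \ref{lemma:coisotropic-lyapunov}) replaces this section by one admitting a Lyapunov function $F$, and the computation $\omega(X,X_F)>0$ shows $X_F$ is nowhere tangent to $S\times S^1$. Since transversality survives reparametrization of $F$, replacing $F$ by $\epsilon\pi^{-1}\arctan(F)$ makes the Hofer norm smaller than $\epsilon$ while keeping $X_F$ transverse; the parametric transversality theorem then yields a time $t_0\in(0,1)$ with $\phi_{t_0}(S\times S^1)\cap(S\times S^1)=\emptyset$, and openness gives the neighbourhood (Lemma \ref{lemma:key-idea}). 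If you want to salvage your chart-by-chart construction, the missing ingredient is precisely a global coherence statement for the local transverse directions, and that is exactly what the Lyapunov function encodes; the cleanest fix is to restructure the proof around Lemmas \ref{lemma:key-idea} and \ref{lemma:coisotropic-lyapunov}.
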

This is proved in \cite{gurel-CCM-2008}. We briefly recall the argument. The key idea is the following lemma:
\begin{lemma}\label{lemma:key-idea}
  Let $S$ be a submanifold (not necessarily nowhere coisotropic) and suppose that $X$ is a nowhere vanishing section of $TS^{\perp\omega}$ which admits a Lyapunov function $F$ defined on a neighborhood of $S$, i.e., $\d F(X)>0$ holds along $S$. Then the Hamiltonian vector field $X_{F}$ is nowhere tangent to $S$.

  If $X_{F}$ is a complete vector field, then for each $\epsilon>0$, $S$ has a neighborhood $U$ so that each compact set $K\subset U$ can be displaced by Hamiltonian isotopy with Hofer length at most $\epsilon$.
\end{lemma}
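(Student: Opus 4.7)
The plan is to handle the two assertions in turn. The first, nowhere-tangency of $X_{F}$, reduces to a one-line symplectic computation: if $X_{F}|_{p}\in T_{p}S$ for some $p\in S$, then $\omega(X_{F}|_{p},X|_{p})=0$ because $X|_{p}\in (T_{p}S)^{\perp\omega}$, but the defining relation $\omega(X_{F},\cdot)=\d F$ identifies this pairing with $\d F(X|_{p})$, which is strictly positive by the Lyapunov hypothesis. Contradiction. As an immediate consequence, $X_{F}$ is nowhere zero along $S$, and compactness of $S$ furnishes a uniform positive lower bound $c>0$ for the component of $X_{F}$ transverse to $TS$ (in any auxiliary Riemannian metric on $M$).

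For the second assertion, given $\epsilon>0$ the goal is to produce a neighborhood $U$ of $S$ whose compact subsets are displaceable with Hofer length at most $\epsilon$. Completeness of $X_{F}$ provides the flow $\varphi^{t}_{F}$ on all of $M$, while uniform transversality and compactness of $S$ yield a time $T_{0}>0$ such that $d(\varphi^{t}_{F}(p),p)\ge ct$ for every $p\in S$ and $t\in[0,T_{0}]$. A routine continuity-compactness argument then shows that for all sufficiently small $r>0$, the tubular neighborhood $V_{r}:=\set{x:d(x,S)<r}$ is displaced by $\varphi^{T_{r}}_{F}$, where $T_{r}=O(r)$ (e.g.\ $T_{r}=5r/c$).

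To obtain a compactly supported generating Hamiltonian, I would pick a smooth cutoff $\chi$ supported in $V_{R}$, equal to $1$ on a neighborhood large enough to contain $\bigcup_{t\in[0,T_{r}]}\varphi^{t}_{F}(V_{r})$; the latter set is contained in an $O(r)$-enlargement of $V_{r}$, so one may take $R$ to be a fixed multiple of $r$. Setting $\tilde F:=\chi F$, the identity $X_{\tilde F}=X_{F}$ holds on $\set{\chi=1}$, so the trajectories of $\tilde F$ starting in $V_{r}$ coincide with those of $F$ throughout $[0,T_{r}]$; hence the time-$T_{r}$ flow of $\tilde F$ displaces $V_{r}$. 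Its Hofer length is bounded by $T_{r}\cdot\mathrm{osc}(\tilde F)\le 2T_{r}\sup_{V_{R}}|F|$, and as $r\to 0$ we have $T_{r}\to 0$ while $\sup_{V_{R}}|F|\to \sup_{S}|F|<\infty$, so this upper bound tends to zero. Choosing $r$ small enough that the bound falls below $\epsilon$ and setting $U=V_{r}$ completes the argument. The \emph{main obstacle} is coordinating these scalings: both $T_{r}$ and $R$ must scale linearly in $r$, so that the displacing time vanishes while the cutoff Hamiltonian retains bounded oscillation controlled by the continuous (hence bounded on compact neighborhoods) values of $F$ near $S$.
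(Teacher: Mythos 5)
Your argument is correct, and while the first assertion is handled exactly as in the paper (the computation $\omega(X_F,X)=\d F(X)>0$ forbids tangency), your proof of the displacement statement takes a genuinely different route. The paper's trick is to replace $F$ by $\epsilon\pi^{-1}\arctan(F)$, which is still a Lyapunov function for $X$ but now has oscillation $<\epsilon$; one then flows for a fixed time $t_{0}\in(0,1)$ at which $\phi_{t_0}(S)\cap S=\emptyset$, separates $S$ and $\phi_{t_0}(S)$ by open sets, and the Hofer bound is automatic because the generating function already has small oscillation. You instead keep $F$, shrink the \emph{flow time} $T_r$ linearly with the tube radius $r$, and pay for the (bounded but not small) oscillation of the cutoff Hamiltonian with the factor $T_r\to 0$. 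Your approach is more quantitative and in fact dispenses with the completeness hypothesis (the cutoff $\chi F$ is compactly supported, and only a short-time flow from a compact set is ever used), at the cost of the scaling bookkeeping you describe; the paper's rescaling is slicker and yields a single neighborhood $U$ independent of the displacing time. One small imprecision worth fixing: the estimate you want from uniform transversality is not $d(\varphi_F^t(p),p)\ge ct$ (a point can move far from itself while staying in $V_r$, parallel to $S$) but rather $d(\varphi_F^t(p),S)\ge c't$ for $t\in[0,T_0]$, which follows from the same lower bound on the normal component of $X_F$ along the compact $S$ together with a Taylor/Gr\"onwall estimate in tubular-neighborhood coordinates; combined with Lipschitz continuity of the flow this gives $d(\varphi_F^{T_r}(q),S)\ge c'T_r-O(r)>r$ for $q\in V_r$ once $T_r$ is a suitable multiple of $r$, which is the displacement you assert.
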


\begin{proof}
  We compute $\omega(X,X_{F})>0$, and thus $X_{F}$ cannot be tangent to $S$.

  Suppose that $X_{F}$ is a complete vector field. Replace $F$ by $\epsilon\pi^{-1}\arctan(F)$, so that $\max F-\min F<\epsilon$. Clearly $F$ is still a Lyapunov function for $X$, and hence $X_{F}$ is still transverse to $S$. Moreover $X_{F}$ is still a complete vector field.

  By the parametric transversality theorem, there exists some time $t_{0}\in (0,1)$ so that $\phi_{t_{0}}(S)$ and $S$ are disjoint. In particular, there are disjoint open sets $U_{1},U_{2}$ around $S$ and $\phi_{t_{0}}(S)$ so that $\phi_{t_{0}}(U_{1})\subset U_{2}$. Then $U=U_{1}$ is the desired open set, since any compact set in $U$ will be displaced by $\phi_{t_{0}}$, which has Hofer length at most $\epsilon$.
\end{proof}

Let $S$ be nowhere coisotropic. Then $S\times S^{1}\subset M\times TS^{1}$ is still nowhere coisotropic, and $T(S\times S^{1})^{\perp \omega}$ contains $TS^{1}\subset T(M\times S^{1})$. In particular, $T(S\times S^{1})^{\perp}$ admits a non-vanishing section $Z$. The strategy is to replace $Z$ by another non-vanishing section $X$ which admits a Lyapunov function $F$. The previous lemma then implies that $S\times S^{1}$ has neighborhoods with arbitrarily small stable displacement energy, yielding Theorem \ref{lemma:coisotropic-sde}.

To achieve this one uses the criterion for existence of Lyapunov functions given in \cite[Theorem II.26]{sullivan-inventiones-1976}; see also \cite{laudenbach-sikorav-IMRN-1994}, and \cite[\S1.4]{gromov-springer-1986}. Applying this criterion, \cite{gurel-CCM-2008} concludes:

\begin{lemma}\label{lemma:coisotropic-lyapunov}
  Let $S\subset M$ be a compact nowhere coisotropic submanifold and suppose that $TS^{\perp\omega}$ has a non-vanishing section; then there is a non-vanishing section $X$ of $TS^{\perp\omega}$ and a smooth function $F$ so that $\d F(X)>0$.\hfill$\square$
\end{lemma}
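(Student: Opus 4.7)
The plan is to deform the given non-vanishing section $Z$ to a new non-vanishing section $X$ of $TS^{\perp\omega}$ whose induced dynamics satisfies the Sullivan criterion \cite[Theorem II.26]{sullivan-inventiones-1976} for the existence of a Lyapunov function; applying Sullivan's criterion then produces the desired $F$ with $dF(X) > 0$ along $S$. This is the strategy pursued in \cite{gurel-CCM-2008}, building on \cite{laudenbach-sikorav-IMRN-1994} and \cite[\S 1.4]{gromov-springer-1986}.

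First, I would use the nowhere coisotropic hypothesis to construct a suitable $X$. At every $p \in S$, the subspace $K_p := T_pS \cap TS^{\perp\omega}_p$ is a proper subspace of $TS^{\perp\omega}_p$, which provides fibrewise room inside $TS^{\perp\omega}$ to deform $Z$. The deformation must be carried out globally, keeping $X$ non-vanishing, while arranging the absence of \emph{structure cycles} for $X$ in the sense of Sullivan (non-trivial positive closed currents invariant under the flow of $X$, or under its restriction to $S$ at points of tangency). Heuristically, if $X$ can be made nowhere tangent to $S$, then its flow escapes any small neighborhood of $S$ and no such invariant currents exist; if arranging this is topologically obstructed, the nowhere coisotropic condition still provides enough flexibility to satisfy Sullivan's criterion after an appropriate perturbation.

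Second, Sullivan's Theorem II.26 applied to $X$ (extended to a vector field on a tubular neighborhood of $S$) then yields the Lyapunov function $F$, with $dF(X) > 0$ along $S$ and defined on a neighborhood of $S$ in $M$.

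The main obstacle is the first step: verifying that the deformation of $Z$ can be carried out so as to eliminate all structure cycles of the line field spanned by $X$. This is a delicate dynamical-cohomological condition, and it is precisely where both hypotheses of the lemma --- nowhere coisotropic, and the existence of a non-vanishing $Z$ --- are used in an essential way, as carried out in detail by \cite{gurel-CCM-2008}.
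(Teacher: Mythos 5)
Your proposal takes essentially the same route as the paper: the paper itself gives no proof of this lemma, stating it as a result quoted from \cite{gurel-CCM-2008} obtained by applying Sullivan's criterion \cite[Theorem II.26]{sullivan-inventiones-1976} (cf.\ \cite{laudenbach-sikorav-IMRN-1994} and \cite[\S1.4]{gromov-springer-1986}), and your outline --- deform the given section $Z$ within $TS^{\perp\omega}$ using the fibrewise room provided by the nowhere coisotropic condition, verify the absence of structure cycles, and invoke Sullivan to produce $F$ --- is exactly that cited argument, with the delicate verification correctly attributed to Gürel rather than carried out, just as in the paper.
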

Together with Lemma \ref{lemma:key-idea}, we conclude Theorem \ref{lemma:coisotropic-sde}, completing the proof of Theorem \ref{theorem:1}.

Interestingly enough, if $L$ is an open Lagrangian, then $L$ admits a nowhere vanishing gradient vector field $X$, so $X$ admits a Lyapunov function. Since $TL^{\perp\omega}=TL$, there are Hamiltonian vector fields which are nowhere tangent to $L$. This shows that being nowhere coisotropic is not a necessary condition to be transverse to a Hamiltonian vector field.

\subsection{Proof of Lemma \ref{lemma:3}}
\label{sec:proof-lemma-3}

Let $H_{t},L,$ be as in the statement, and introduce $J\in \mathscr{J}$ so that:
\begin{equation*}
  \beta(H_{t})+\int_{0}^{1}\max(H_{t})-\min(H_{t}|_{L})dt < \hbar(L,J).
\end{equation*}

Consider the moduli space $\mathscr{M}$ of half-infinite Floer cylinders:
\begin{equation}\label{eq:defn-mathfrak-e}
  \left\{
    \begin{aligned}
      &u:(-\infty,0]\times \R/\Z\to M,\\
      &\bd_{s}u+J(u)(\bd_{t}u-X_{\delta,t}(u))=0,\\
      &u(0,\R/\Z)\in L,\\
      &u(0,0)=\mathrm{pt},
    \end{aligned}
  \right.
\end{equation}
where $\mathrm{pt}$ is a fixed basepoint in $L$, and $H_{\delta,t}$ is a $C^{2}$-small perturbation of $H_{t}$ used to achieve non-degeneracy of the time-1 orbits and transversality of the relevant moduli spaces; here $X_{\delta,t}$ is the Hamiltonian vector field of $H_{\delta,t}$. We continue to suppose $H_{\delta,t}$ satisfies the hypotheses of Lemma \ref{lemma:3}.

For convenience in the proof, introduce the abbreviations:
\begin{equation*}
  \beta:=\beta(H_{\delta,t}),\hspace{.25cm}E_{+}=\int_{0}^{1}\max(H_{\delta,t})dt,\hspace{.25cm}E_{-}=\int_{0}^{1}\min(H_{\delta,t}|_{L})dt,
\end{equation*}
and $\hbar=\hbar(L,J)$. We suppose $\beta+E_{+}-E_{-}+2\epsilon<\hbar$ for a small $\epsilon>0$.

Let us say that $u\in \mathscr{M}$ is \emph{admissible} provided:
\begin{enumerate}
\item there is a capped orbit $(x,v)$ with action less than $\beta+E_{+}+2\epsilon$,
\item the left asymptotic of $u$ is the orbit $x$,
\item the concatentation $v\# u$ forms a disk with boundary on $L$ with zero symplectic area.
\end{enumerate}

Let $\mathrm{CF}_{<\beta+E_{+}+2\epsilon}(H_{\delta,t},J)$ be the subcomplex generated by the capped orbits of $X_{\delta,t}$ whose actions are less than $\beta+E_{+}+2\epsilon$. Define a map:
\begin{equation*}
  \mathfrak{e}:\mathrm{CF}_{<\beta+E_{+}+2\epsilon}(H_{\delta,t},J)\to \Z/2,
\end{equation*}
by counting the rigid elements in $\mathscr{M}$ as follows: $\mathfrak{e}(x,v)$ is the number of rigid elements in $\mathscr{M}$ whose left asymptotic of $u$ is the orbit $x$ and $v\# u$ forms a disk with boundary on $L$ with zero symplectic area. It is clear that only admissible elements contribute to this count.

The key estimate is:
\begin{proposition}
  The admissible curves $u\in \mathscr{M}$ have Floer energies bounded from above by $\beta+E_{+}-E_{-}+2\epsilon<\hbar$.
\end{proposition}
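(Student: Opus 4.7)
The plan is to apply the standard energy identity for a half-infinite Floer strip with Lagrangian boundary condition and then use the three clauses of admissibility as direct plug-ins. No Gromov compactness or transversality is needed here---it is a one-line calculation once the three ingredients are in place.

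Concretely, I would first use the Floer equation to rewrite $|\bd_s u|_J^2 = u^*\omega(\bd_s,\bd_t) - \bd_s(H_{\delta,t}(u))$ and integrate over $(-\infty,0]\times \R/\Z$, obtaining
\begin{equation*}
  E(u) = \int u^*\omega - \int_0^1 H_{\delta,t}(u(0,t))\,\d t + \int_0^1 H_{\delta,t}(x(t))\,\d t,
\end{equation*}
where $x$ is the left asymptotic orbit of $u$. The key algebraic move then invokes the third admissibility condition: since the disk $v\# u$ has vanishing symplectic area, $\int u^*\omega = -\int v^*\omega$, so the right-hand side reassembles into
\begin{equation*}
  E(u) = \mathcal{A}_{H_\delta}(x,v) - \int_0^1 H_{\delta,t}(u(0,t))\,\d t,
\end{equation*}
with the usual convention $\mathcal{A}_{H_\delta}(x,v) = -\int v^*\omega + \int_0^1 H_{\delta,t}(x(t))\,\d t$ for the action.

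The first admissibility clause then bounds the action term by $\beta + E_{+} + 2\epsilon$, while the Lagrangian boundary condition $u(0,\cdot)\subset L$ forces the second term to be at least $\int_0^1 \min(H_{\delta,t}|_{L})\,\d t = E_{-}$. Subtracting yields $E(u) < \beta + E_{+} - E_{-} + 2\epsilon$, which is strictly less than $\hbar$ by the standing hypothesis. The only care needed is consistency of the sign conventions between the action functional and the Floer equation, which is entirely routine; there is no real obstacle, because clause (3) of admissibility was designed precisely to cancel $\int u^*\omega$ against the $H_{\delta,t}(x)$ term, and the Lagrangian hypothesis on $\min(H_t|_{L})$ enters at exactly the right place as the boundary integral at $s=0$.
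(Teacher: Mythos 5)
Your argument is correct and is essentially identical to the paper's proof: both compute the energy via the standard identity for the half-infinite Floer cylinder, use the zero-area condition on $v\# u$ to convert $\omega(u)+\int_x H_{\delta,t}$ into the action of the capped orbit $(x,v)$, bound that action by $\beta+E_{+}+2\epsilon$ using admissibility, and bound the boundary term from below by $E_{-}$ using $u(0,\cdot)\subset L$. The only cosmetic difference is that the paper states the energy identity as an inequality $E(u)\le\cdots$ rather than an equality, which is immaterial for the bound.
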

\begin{proof}
  If $u\in \mathscr{M}$ is admissible, with left asymptotic $x$, then:
  \begin{equation*}
    E(u)\le \omega(u)+\int_{x} H_{\delta,t}-\int_{u(0,t)}H_{\delta,t}=\text{(action of $(x,v)$)}-\int_{u(0,t)}H_{\delta,t},
  \end{equation*}
  where the capping $v$ is such that $v\# u$ has zero symplectic area. Since we assume that $(x,v)$ has action less that $\beta+E_{+}+2\epsilon$, and $\int_{0}^{1}H_{\delta,t}|_{L}dt$ is at least $E_{-}$, we conclude the desired upper bound.
\end{proof}
This a priori energy bound implies the piece of $\mathscr{M}$ used to define $\mathfrak{e}$, and to prove it is a chain map, is compact up to breaking of Floer cylinders; disk bubbling on $L$ (or sphere bubbling in $M$) cannot occur since the energies are below the bubbling threshold (bearing in mind that $\hbar$ is the minimal area a $J$-holomorphic disk or sphere).

Standard Floer theoretic arguments then prove $\mathfrak{e}:\mathrm{CF}_{<\beta+E_{+}+2\epsilon}\to \Z/2$ is a chain map, for a generic perturbed system $H_{\delta,t}$.

The next step in the argument is to show that some representative of the unit is sent by $\mathfrak{e}$ to $1$:
\begin{proposition}\label{prop:PSS-e}
  Let $\zeta\in \mathrm{CF}_{<E_{+}+\epsilon}$ be the cycle representing the unit obtained by the standard PSS map (as recalled below). Then $\mathfrak{e}(\zeta)=1$.
\end{proposition}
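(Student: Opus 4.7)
The plan is to compute $\mathfrak{e}(\zeta)$ by interpreting it as a count of ``teardrop'' configurations obtained by gluing a PSS half-cylinder to an element of $\mathscr{M}$, then deforming via a Hamiltonian-turnoff homotopy to a count of $J$-holomorphic disks with boundary on $L$ passing through $\mathrm{pt}$. The a priori area bound inherited from the preceding proposition will then force only the constant disk to contribute.

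First, I would fix the description of $\zeta$ used in the statement. The standard PSS map sends the fundamental class $[M]$ to a Floer cycle by counting rigid half-infinite cylinders $v:[0,\infty)\times \R/\Z\to M$ solving the Floer equation for a monotone homotopy which interpolates from $H_{\delta,t}$ at $s=0$ to the zero Hamiltonian for $s$ large, with no interior incidence condition, asymptotic at $+\infty$ to a periodic orbit $x$. A standard action computation, using the monotonicity of the homotopy, shows the output lies in filtration strictly less than $E_{+}+\epsilon$.

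Next, the chain-level expression $\mathfrak{e}(\zeta)$ is a count of matched pairs $(v,u)$: here $v$ is a PSS half-cylinder with asymptotic $x$, and $u\in \mathscr{M}$ has the same left asymptotic $x$, with the matching capping condition ensuring that $v\#u$ forms a topological disk in $M$ with boundary on $L$ of zero symplectic area. Gluing at the orbit packages such pairs into a moduli space $\mathscr{N}_{0}$ of Floer teardrops, i.e., maps $w:(D,\bd D)\to (M,L)$ with $w(1)=\mathrm{pt}$ solving a Floer-type equation in which the Hamiltonian has been smoothly turned off near an interior point of the disk.

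Then I would introduce a one-parameter deformation $\mathscr{N}_{\tau}$ (for $\tau\in [0,1]$) that gradually turns off the Hamiltonian across the entire disk, so that $\mathscr{N}_{1}$ consists of genuine $J$-holomorphic disks with boundary on $L$ through $\mathrm{pt}$. Combining the action estimate from the preceding proposition with the filtration bound $E_{+}+\epsilon$ for $\zeta$, one obtains a uniform Floer energy bound of the form $\beta+E_{+}-E_{-}+2\epsilon<\hbar$ along the entire homotopy. Since $\hbar=\hbar(L,J)$ is the bubbling threshold for both $J$-holomorphic disks on $L$ and $J$-holomorphic spheres in $M$, Gromov compactness rules out all bubbling, and the only degenerations are breakings off Floer cylinders on the asymptotic end, which are the standard contributions showing the count is independent of $\tau$.

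Finally, at $\tau=1$ the only element of $\mathscr{N}_{1}$ compatible with the area bound $\omega(w)<\hbar$ is the constant disk at $\mathrm{pt}$, which is regular (for generic $J$) and contributes $1$ mod $2$. The principal technical obstacle will be arranging simultaneous transversality of the entire parametric moduli space $\bigcup_{\tau}\mathscr{N}_{\tau}$ while maintaining the uniform energy bound; this should be accomplished via generic compactly supported perturbations of the homotopy data, exploiting the openness of the strict inequality $\beta+E_{+}-E_{-}+2\epsilon<\hbar$ to keep bubbling suppressed throughout the deformation.
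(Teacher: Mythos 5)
Your proposal is correct and follows essentially the same route as the paper: there, $\mathfrak{e}(\zeta)$ is likewise realized as one end of a one-parameter family of perturbed-holomorphic disks of zero area with boundary on $L$ through $\mathrm{pt}$ (parametrized by translating the cutoff, $\beta(s+R)$), whose other end is the single constant disk, with the energy bound $E_{+}-E_{-}<\hbar$ suppressing disk and sphere bubbling along the cobordism. The only cosmetic differences are that the paper uses the shift parameter $R$ in place of your global turn-off parameter $\tau$, and that the constant disk is automatically regular so no genericity is needed at that end.
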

\begin{proof}
  We begin by briefly recalling the PSS cycle of \cite{piunikhin-salamon-schwarz-1996} which represents the unit element; further details are given in \S\ref{sec:pss-spectr-invar}. It is defined by counting the rigid finite-energy solutions to:
  \begin{equation}\label{eq:PSS}
    \left\{
      \begin{aligned}
        &v:\C\to M\text{ smooth},\\
        &u=v(e^{2\pi (s+it)}),\\
        &\bd_{s}u+J(u)(\bd_{t}u-\beta(s)X_{\delta,t}(u))=0,
      \end{aligned}
    \right.
  \end{equation}
  where $\beta(s)$ is a standard cut-off function satisfying $\beta(s)=0$ for $s\le 0$ and $\beta(s)=1$ for $s\ge 1$. Each such rigid solution $v$ is considered as a capping of the asymptotic orbit, and therefore the count of rigid solutions, denoted $\zeta$, is valued in the complex $\mathrm{CF}(H_{\delta,t},J)$.

  A standard energy estimate,\footnote{Some care is needed in the open case; one needs to appeal to the maximum principle to conclude this energy estimate. See \cite[\S5.1]{frauenfelder-schlenk-IJM-2007} for further discussion.} as in \cite{schwarz-pacific-j-math-2000}, implies that the action of the resulting capped orbit is bounded by $E_{+}$, and hence the PSS element $\zeta$ is valued in $\mathrm{CF}_{<E_{+}+\epsilon}(H_{\delta,t},J)$.

  It remains to prove that $\mathfrak{e}(\zeta)=1$. A similar argument appears in \cite{alizadeh-atallah-cant-arXiv-2023}. We will consider the parametric moduli space of pairs $(R,w)$ where $w:D(1)\to M$ is smooth and has zero symplectic area and solves:
  \begin{equation*}
    \left\{
      \begin{aligned}
        &u=w(e^{2\pi (s+it)})\text{ (so }u:(-\infty,0]\times \R/\Z\to M),\\
        &\bd_{s}u+J(u)(\bd_{t}u-\beta(s+R)X_{\delta,t}(u))=0,\\
        &u(0,t)\in L\text{ and }u(0,0)=\mathrm{pt},
      \end{aligned}
    \right.
  \end{equation*}
  and $R\in \R$. This parametric moduli space has two non-compact ends. One end is when $R<0$, in which case $w$ is simply a $J$-holomorphic disk, with zero symplectic area, and hence must equal the point $\mathrm{pt}$. The other is when $R\to \infty$, in which case $w$ ``splits'' in the Floer theoretic sense into a configurations $(v,u)$ where $v$ solves \eqref{eq:PSS} and $u$ solves the equation \eqref{eq:defn-mathfrak-e} defining $\mathfrak{e}$. The count of such rigid configurations is the composition $\mathfrak{e}(\zeta)$.

  The usual energy estimate for Floer cylinders implies that any solution $w$ has energy:
  \begin{equation*}
    \int_{0}^{1}\int_{-\infty}^{0}\omega(\bd_{s}u,\bd_{t}u-\beta(s+R)X_{\delta,t}(u))ds dt\le E_{+}-E_{-}<\hbar,
  \end{equation*}
  and hence the 1-dimensional parametric moduli space defines a compact cobordism between the slice where $R=-1$ (which is a single point) and the slice where $R=R_{0}$. The cobordism is compact because we are below the bubbling threshold for holomorphic disks and spheres. Sending $R_{0}\to \infty$ and appealing to standard Floer theoretic breaking-and-gluing results then proves that $\mathfrak{e}(\zeta)=1$.
\end{proof}

The reader will notice that we have not yet invoked the definition of the boundary depth $\beta(H_{\delta,t})$. In the final step of the argument, we will use it to prove:
\begin{proposition}
  Every representative of the unit element in $\mathrm{CF}_{<E_{+}+\epsilon}$ is sent by $\mathfrak{e}$ to $1$.
\end{proposition}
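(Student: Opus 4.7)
The plan is to leverage Proposition \ref{prop:PSS-e} together with the chain map property of $\mathfrak{e}$ and the defining property of boundary depth. If $\zeta'\in \mathrm{CF}_{<E_{+}+\epsilon}$ is any representative of the unit, and $\zeta\in \mathrm{CF}_{<E_{+}+\epsilon}$ is the PSS representative from Proposition \ref{prop:PSS-e}, then $\zeta - \zeta'$ is a boundary in the full Floer complex. By the definition of $\beta = \beta(H_{\delta,t})$ (see \cite{usher-israel-j-math-2011}), one can choose a primitive $\eta$ with $\bd \eta = \zeta - \zeta'$ satisfying $\eta \in \mathrm{CF}_{<E_{+}+\epsilon+\beta}$.

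The key numerical observation is $E_{+}+\epsilon+\beta < \beta+E_{+}+2\epsilon$, which places $\eta$ inside the subcomplex $\mathrm{CF}_{<\beta+E_{+}+2\epsilon}$ on which the previous proposition established that $\mathfrak{e}$ is a well-defined chain map. Since the target $\Z/2$ carries the trivial differential, the chain map relation $\mathfrak{e}\circ \bd = 0$ immediately yields $\mathfrak{e}(\zeta') = \mathfrak{e}(\zeta)$, which equals $1$ by Proposition \ref{prop:PSS-e}.

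The main obstacle, as in many Floer theoretic filtration arguments, lies in ensuring that the filtered primitive $\eta$ stays below the energy threshold past which disk or sphere bubbling would invalidate the chain map property of $\mathfrak{e}$. The $2\epsilon$ cushion built into the action filtration is calibrated precisely for this purpose: it accommodates simultaneously the $\epsilon$-slack needed to house a representative of the unit in $\mathrm{CF}_{<E_{+}+\epsilon}$ and the extra $\beta$ introduced by Usher's inequality, while still leaving the a priori energy bound $\beta+E_{+}-E_{-}+2\epsilon<\hbar$ strict. With this bookkeeping confirmed, the conclusion reduces to $\mathfrak{e}\circ\bd = 0$ together with $\mathfrak{e}(\zeta) = 1$, so that $\mathfrak{e}$ is independent of the choice of representative within this filtration.
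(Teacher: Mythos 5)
Your argument is correct and is essentially the paper's own proof: both write an arbitrary unit representative as the PSS cycle plus an exact term, invoke the definition of boundary depth to replace the primitive by one lying in $\mathrm{CF}_{<\beta+E_{+}+2\epsilon}$, and then use that $\mathfrak{e}$ is a chain map on that subcomplex to conclude $\mathfrak{e}(\zeta')=\mathfrak{e}(\zeta)=1$. The only cosmetic difference is that you phrase the replacement in terms of a filtered primitive $\eta$ of $\zeta-\zeta'$ rather than replacing $\mu$ by $\mu'$ with $d\mu=d\mu'$; the bookkeeping with the $2\epsilon$ cushion matches the paper's.
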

\begin{proof}
  We recall that the unit element is the cohomology class of $\zeta$, as defined above (by the PSS construction). Thus it suffices to prove that every element of the form $\zeta+d\mu$ which lies in $\mathrm{CF}_{<E_{+}+\epsilon}$ is sent by $\mathfrak{e}$ to $1$.

  Clearly, if $\mathfrak{e}$ was a well-defined chain map on the entire chain complex $\mathrm{CF}$, then $\mathfrak{e}(\zeta+d\mu)=\mathfrak{e}(\zeta)=1$ would hold automatically. However, $\mathfrak{e}$ is not a well-defined chain map on all of $\mathrm{CF}$. However, our earlier discussion establishes that $\mathfrak{e}$ is a chain map on $\mathrm{CF}_{<\beta+E_{+}+2\epsilon}$.

  Since $d\mu\in \mathrm{CF}_{<E_{+}+\epsilon}$ is exact in $\mathrm{CF}$, the definition of the boundary depth implies $d\mu$ is exact in $\mathrm{CF}_{<\beta+E_{+}+2\epsilon}$. Thus $d\mu=d\mu'$ where $\mu'\in \mathrm{CF}_{<\beta+E_{+}+2\epsilon}$, and hence:
  \begin{equation*}
    \mathfrak{e}(\zeta+d\mu)=\mathfrak{e}(\zeta+d\mu')=\mathfrak{e}(\zeta)=1,
  \end{equation*}
  since $\mathfrak{e}$ is a chain map on $\mathrm{CF}_{<\beta+E_{+}+2\epsilon}$. This completes the proof.
\end{proof}

To finish the proof of Lemma \ref{lemma:3}, we recall the definition of the spectral invariant as a homological min-max:
\begin{equation*}
  c([M];H_{\delta,t}):=\inf_{\mu}\set{\text{highest action orbit appearing in }\zeta+d\mu}.
\end{equation*}
Clearly, it suffices to infimize over $\zeta+\d\mu\in \mathrm{CF}_{<E_{+}+\epsilon}$. However, each such chain is sent by $\mathfrak{e}$ to $1\in \Z/2$. In particular, for any such chain $\zeta+\d\mu$, at least one of the capped orbits $(x,v)$ appearing in the chain appears as the left asymptotic of a solution $u$ to \eqref{eq:defn-mathfrak-e} which defines $\mathfrak{e}$. The energy of $u$ is at most $\text{action of }(x,v)-E_{-}$, and hence:
\begin{equation*}
  \text{action of }(x,v)-E_{-}\ge 0\implies c([M];H_{\delta,t})\ge E_{-}.
\end{equation*}
The proof is completed by taking a limit of the perturbed systems $H_{\delta,t}$, as explained above.\hfill$\square$

\subsection{Proof of Theorem \ref{theorem:5}}
\label{sec:proof-theorem-5}

We recall the set-up: $N\subset (M,\alpha)$ is a hypersurface of restricted contact type. The Liouville vector field $Z$ is transverse to $N$, and so a neighborhood of $N$ foliated by level sets of a function $r$ so that:
\begin{enumerate}
\item $r|_{N}=1$,
\item $\d r(Z)=r$.
\end{enumerate}
The Hamiltonian vector field $X_{r}$ has orbits of the form $\rho_{s}(\gamma(t))$ where $\gamma(t)$ is an orbit of $X_{r}$ inside of $N$ and $\rho_{s}$ is the Liouville flow for some time $s$.

Fix some $A$ less than the minimal period of the Reeb flow minus $\epsilon$.

Let $f_{\delta}:\R\to \R$ be a non-negative bump function with $\max f_{\delta}=A$, with support in $(e^{-\delta},e^{\delta})$, and suppose that $\delta$ is very small. We additionally require that $f_{\delta}'(r)$ is a period of the Reeb flow only when:
\begin{enumerate}
\item $f_{\delta}(r)\le \delta$,
\item $f_{\delta}(r)\ge A-\delta$.
\end{enumerate}
Consider $H=f_{\delta}(r)$ as a Hamiltonian function. We will compute the possible actions of contractible orbits. The action of a contractible orbit $\eta(t)$ is:
\begin{equation*}
  a(\eta)=\int H(\eta(t))-\int \eta^{*}\alpha,
\end{equation*}
Each such orbit satisfies $\eta(t)=\rho_{\log(r(\eta))}(\gamma_{\eta}(Tt))$ where $\gamma_{\eta}(t)$ is a $T$-periodic orbit for the Reeb flow in $N$, and $T=f_{\delta}'(r(\eta))$. Notice that the value $r(\eta)$ is constant along the flow.

Case 1: if $f_{\delta}(r)\le \delta$, then there are three possibilities:
\begin{equation*}
  \text{$a(\eta)=0$, $a(\eta)\ge e^{-\delta}T$ or $a(\eta)\le \delta-e^{-\delta}T$,}
\end{equation*}
where $T>0$ is a \emph{positive} period of the Reeb flow.

Case 2: if $f_{\delta}(r)\ge A-\delta$, then there are again three possibilities:
\begin{equation*}
  \text{$a(\eta)=A$, $a(\eta)\ge A-\delta+e^{-\delta}T$, or $a(\eta)\le A-e^{-\delta}T$,}
\end{equation*}
where $T>0$ is a positive period as in Case 1.

Since $A\le \abs{T}-\epsilon$, we can pick $\delta$ small enough so that either the action of $\eta$ is non-positive, or:
\begin{equation*}
  a(\eta)\ge \min\set{A,e^{-\delta}T,A-\delta+e^{-\delta}T}\ge A,
\end{equation*}
where the latter inequality holds as $\delta\to 0$.

Since the spectral invariant $c([M],H)$ is non-negative, the spectral invariant $c([M],-H)$ is non-positive, and the spectral norm is non-degenerate, $c([W],H)$ must be strictly positive. Since $c([M],H)$ is the action of some orbit, we must have that $c([M],H)\ge A$. The desired result then follows by taking the limit $\epsilon\to 0$.\hfill$\square$

\subsection{Proof of Theorem \ref{theorem:7}}
\label{sec:proof-theor-7}

The goal is to show that the measurement $m$ defined in \eqref{eq:measurement} satisfies \ref{M1} and \ref{M2}.

Consider the Floer continuation map $\mathrm{CF}(H_{t}^{k},J)\to \mathrm{CF}(0,J)$ associated to the linear interpolation from $H_{t}^{k}$ to $0$. This map is action decreasing, up to an error,
\begin{equation*}
  \textstyle(\text{action of input})-(\text{action of output})-\int_{0}^{1}\min H_{t}^{k}dt\ge 0.
\end{equation*}
Since the spectral invariant of the unit with respect to the zero system has action equal to $0$, it follows that:
\begin{equation*}
  c([M],H_{t}^{k})\ge \int_{0}^{1}\min H_{t}^{k}dt\ge k\int_{0}^{1}\min H_{t}-\max H_{t}dt,
\end{equation*}
where we use that $H_{t}^{k}$ is mean-normalized to conclude the maximum is non-negative and the minimum is non-positive. Therefore:
\begin{equation*}
  -c([M],H_{t}^{k})\le k(\text{Hofer length of }H_{t}).
\end{equation*}
The spectral invariant is sensitive only to the time-1 map in the universal cover, and hence we can infimize over isotopies to conclude:
\begin{equation*}
  -c([M],H_{t}^{k})\le k\norm{\varphi_{t}}_{\mathrm{Hofer}}.
\end{equation*}
Finally, dividing by $k$ and taking the limit implies $m(\varphi_{t})\le \norm{\varphi_{t}}_{\mathrm{Hofer}}$, which is exactly \ref{M1}.

Next we establish property \ref{M2}, so suppose $H_{t}$ is supported in a displaceable Darboux ball $B$. Then $H_{t}^{k}-k\mu(H_{t})$ is mean-normalized, where $\mu(H_{t})$ is the time-dependent mean of $H_{t}$, and hence:
\begin{equation*}
  m(\varphi_{t})=-\lim_{k\to \infty}\frac{c([M],H_{t}^{k}-k\mu(H_{t}))}{k}.
\end{equation*}
It is well-known property of the Floer homology spectral invariants that:
\begin{equation*}
  c([M],H_{t}^{k}-k\mu(H_{t}))=c([M],H_{t}^{k})-k\int_{0}^{1}\mu(H_{t})dt,
\end{equation*}
i.e., adding a time-dependent constant simply shifts the spectral invariant appropriately. Thus:
\begin{equation*}
  m(\varphi_{t})=\int_{0}^{1}\mu(H_{t})dt-\lim_{k\to \infty}\frac{c([M],H_{t}^{k})}{k}=\int_{0}^{1}\mu(H_{t})dt=\frac{\mathrm{Cal}(\varphi_{1})}{\mathrm{Vol}(M)}.
\end{equation*}
The middle equality holds because the spectral capacity of a displaceable ball is bounded, and the final equality holds by definition of the Calabi invariant. This proves \ref{M2}.\hfill$\square$

\appendix

\section{Spectral invariants from the nonarchimedean perspective}
\label{sec:nonarchimedean}

In this appendix we briefly review the definition of spectral invariants in semiconvex manifolds, and explain how to use the nonarchimedean perspective from \cite{usher-zhang-GT-2016,usher-ASENS-2013} to recover \cite[Theorem 5.1]{entov-polterovich-compositio-2009} .

\subsection{Floer complex over the universal Novikov field}
\label{sec:floer-complex-over}

Let $H_{t}$ be a Hamiltonian system whose time-1 map has non-degenerate fixed points, and define:
\begin{equation*}
  \mathrm{CF}(H_{t};\Lambda):=\set{F:\R\to V(H_{t}):F|_{(-\infty,L)}\text{ has finite support}}.
\end{equation*}
Here $V(H_{t})$ is the free $\Z/2$-vector space generated by the $1$-periodic orbits of $H_{t}$. Addition is defined as the usual addition of function.

In a similar manner, we define the universal Novikov field:
\begin{equation*}
  \Lambda=\set{\lambda:\R\to \Z/2:\lambda|_{(-\infty,L)}\text{ has finite support}}.
\end{equation*}
There are multiplication operations induced by \emph{discrete convolution}; e.g.,
\begin{equation*}
  (\lambda \cdot F)(a)=\sum_{b+c=a}\lambda(b)F(c).
\end{equation*}
This defines a multiplication $\Lambda\otimes \mathrm{CF}(H_{t};\Lambda)\to \mathrm{CF}(H_{t};\Lambda)$. Similar formulas give a multiplication $\Lambda\otimes \Lambda\to \Lambda$ in such a way that $\Lambda$ becomes a field, and then $\mathrm{CF}(H_{t};\Lambda)$ becomes a vector space over $\Lambda$. See, for instance, \cite{hutchings-zeta-24} for this perspective on the Novikov field.

It is convenient to introduce the symbol $\tau^{a}\gamma\in \mathrm{CF}(H_{t};\Lambda)$ to represent the element defined by:
\begin{equation*}
  (\tau^{a}\gamma)(a')=\delta_{a,a'}\gamma,
\end{equation*}
where $\delta_{a,a'}=1$ if $a=a'$ and is zero otherwise, and $\gamma\in V(H_{t})$. Then any element in $\mathrm{CF}(H_{t};\Lambda)$ is a semi-infinite sum of terms of the form $\tau^{a}\gamma$.

It is not hard to see that, if $\gamma_{1},\dots,\gamma_{k}$ is the complete list of orbits of $H_{t}$, and $a_{1},\dots,a_{k}$ are arbitrary numbers, then $\tau^{a_{1}}\gamma_{1},\dots,\tau^{a_{k}}\gamma_{k}$ forms a basis for $\mathrm{CF}(H_{t};\Lambda)$ over the field $\Lambda$.

\subsection{Nonarchimedean filtration}
\label{sec:nonarch-filtr}

On $V(H_{t})$ we define the nonarchimedean filtration:\footnote{This $\ell$ should not be confused with the Lagrangian capacity in \S\ref{sec:lagr-bound-depth}.}
\begin{equation*}
  \ell(\sum c_{i}\gamma_{i})=\sup\set{\int_{0}^{1} H_{t}(\gamma_{i}(t))dt:c_{i}\ne 0}.
\end{equation*}
This extends to the following nonarchimedean filtration on $\mathrm{CF}(H_{t};\Lambda)$ by:
\begin{equation*}
  \ell(F)=\sup\set{\ell(F(a))-a:a\in \R}.
\end{equation*}
One easily verifies for any $\lambda_{1},\dots,\lambda_{k}\in \Lambda$ that:
\begin{equation*}
  \ell\bigg(\sum_{i=1}^{k} \lambda_{i}\tau^{a_{i}}\gamma_{i}\bigg)=\max\set{\ell(\lambda_{1}\tau^{a_{1}}\gamma_{1}),\dots,\ell(\lambda_{k}\tau^{a_{k}}\gamma_{k})}.
\end{equation*}
In other words, $\set{\tau^{a_{1}}\gamma_{1},\dots,\tau^{a_{k}}\gamma_{k}}$ is an \emph{orthogonal basis} for $\mathrm{CF}(H_{t};\Lambda)$. Thus $(\mathrm{CF}(H_{t};\Lambda),\ell)$ is an orthogonalizable $\Lambda$-space, and the results of \cite{usher-zhang-GT-2016} can be applied.

\subsection{The Floer differential}
\label{sec:floer-differential}

In this section we briefly recall the definition of the differential on $\mathrm{CF}(H_{t};\Lambda)$ using the auxiliary data of a generic $\omega$-tame complex structure $J$. In the semipositive framework, one fixes a generic $J$ so that the moduli space of simple $J$-holomorphic curves is transversally cut out. For generic $H_{t}$, the moduli space of finite energy \emph{Floer cylinders} with asymptotics $\gamma_{-},\gamma_{+}$ and symplectic area $b$, i.e., solutions of:
\begin{equation}\label{eq:floer-differential}
  \left\{
    \begin{aligned}
      &u:\R\times \R/\Z\to M,\\
      &\bd_{s}u+J(u)(\bd_{t}u-X_{t}(u))=0,\\
      &\textstyle\lim_{s\to\pm \infty}u(s,t)=\gamma_{\pm}(t)\text{ and }\omega(u)=b
    \end{aligned}
  \right.
\end{equation}
will be compact up to translations and the usual breaking of Floer trajectories. In the semipositivity framework, it is important that $H_{t}$ is generic in order for this compactness to hold, because one obstructs the bubbling of $J$-holomorphic spheres using general position arguments.

Let us denote by $I(\gamma_{-},\gamma_{+},b)$ the finite count (reduced modulo 2) of rigid-up-to-translation solutions to \eqref{eq:floer-differential}. Define on $\mathrm{CF}(H_{t};\Lambda)$ the map:
\begin{equation*}
  d(\tau^{a}\gamma_{-})=\sum_{b,\gamma_{+}}I(\gamma_{-},\gamma_{+},b)\tau^{a+b}\gamma_{+}.
\end{equation*}
In words, the differential counts all the rigid-up-to-translation Floer cylinders, using the Novikov coefficients to keep track of their symplectic areas. It is well-known that $d^{2}=0$ provided $H_{t}$ is generic.

\subsection{Floer complex of capped orbits}
\label{sec:comp-with-floer}

Let $\mathrm{CF}(H_{t})$ denote the vector space of semi-infinite sums of capped orbits $(\gamma,v)$. Here the capping $v$ is an equivalence class of disks bounding $\gamma$; two disks are equivalent if their difference forms a sphere with zero symplectic area and zero first Chern number. The sums are semi-infinite in that for any $L\in \R$, a sum can have only finitely many terms $(\gamma,v)$ with $\omega(v)<L$.

Define a morphism: $\iota:\mathrm{CF}(H_{t})\to \mathrm{CF}(H_{t};\Lambda)$ by:
\begin{equation*}
  \iota:(\gamma,v)\mapsto \tau^{\omega(v)}\gamma.
\end{equation*}
It is straightforward to check that this morphism is well-defined.

We obtain a nonarchimedean filtration on $\mathrm{CF}(H_{t})$ by pulling back the nonarchimedean filtration on $\mathrm{CF}(H_{t};\Lambda)$. Moreover, one can define the Floer differential on $\mathrm{CF}(H_{t})$ (using the same equation as \eqref{eq:floer-differential}) in such a way that $\iota$ becomes a chain map.

Pick an auxiliary section $\mathfrak{s}$ of the determinant line bundle of $(M,J)$ whose zero set is disjoint from all orbits of $H_{t}$. The signed intersection number between a capping and $\mathfrak{s}^{-1}(0)$ is well-defined (independent of the representative). The section can also be used to define Conley-Zehnder indices $\mathrm{CZ}_{\mathfrak{s}}(\gamma)$ in such a way that:
\begin{equation}\label{eq:degree}
  \deg(\gamma,v)=n-\mathrm{CZ}_{\mathfrak{s}}(\gamma)-2\mathfrak{s}^{-1}(0)\cdot v
\end{equation}
defines a grading on $\mathrm{CF}(H_{t})$ so that the Floer differential decreases grading by $1$. The normalization of \eqref{eq:degree} is chosen so that the PSS morphism sends a cycle of dimension $k$ to a Floer cycle of grading $k$.

We then have the following extension of coefficients lemma (see \cite[Proposition 2.4]{mak-sun-varolgunes-jtopol-2024} for the same result):
\begin{lemma}
  Let $\zeta\in \mathrm{CF}_{k}(H_{t})$ lie in the $k$th graded piece. Then:
  \begin{equation*}
    \inf\set{\ell(\zeta+d \eta):\eta\in \mathrm{CF}_{k-1}(H_{t})}=\inf\set{\ell(\iota(\zeta)+d \beta):\beta\in \mathrm{CF}(H_{t};\Lambda)}.
  \end{equation*}
  In other words, extending coefficients to the universal Novikov field does not decrease the nonarchimedean distance to the subspace of exact elements.
\end{lemma}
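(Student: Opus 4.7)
The inequality $\ge$ follows immediately: for any $\eta \in \mathrm{CF}_{k-1}(H_{t})$, setting $\beta = \iota(\eta)$ yields $\ell(\iota(\zeta) + d\beta) = \ell(\iota(\zeta + d\eta)) = \ell(\zeta + d\eta)$, since $\iota$ is a filtration-preserving chain map; taking infima gives the inequality.

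For the reverse inequality, the plan is to realize $\iota$ as an extension-of-scalars map along a filtered subring $\Lambda_{\omega} \hookrightarrow \Lambda$, where $\Lambda_{\omega}$ consists of formal sums $\sum c_{a} \tau^{a}$ with $a$ restricted to the period group $\omega(\pi_{2}(M)) \subset \R$. Concretely, fix a reference capping $v_{i}$ of each orbit $\gamma_{i}$, giving basis elements $e_{i} = (\gamma_{i}, v_{i}) \in \mathrm{CF}(H_{t})$; these form an orthogonal $\Lambda_{\omega}$-basis of $\mathrm{CF}(H_{t})$ (with the appropriate semi-infinite completion), while $\{\iota(e_{i})\}$ is the $\Lambda$-orthogonal basis established in the preceding discussion. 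The Floer differential is $\Lambda_{\omega}$-linear in this basis, because the symplectic areas of Floer cylinders lie in the period group relative to the chosen reference cappings; thus $\mathrm{CF}(H_{t};\Lambda)$ is recognized as the completed tensor product of $\mathrm{CF}(H_{t})$ with $\Lambda$ over $\Lambda_{\omega}$.

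Given $\beta = \sum_{i} \mu_{i} \iota(e_{i}) \in \mathrm{CF}(H_{t};\Lambda)$ with $\mu_{i}\in\Lambda$, I truncate each $\mu_{i}$ to a $\Lambda_{\omega}$-valued coefficient whose capping data realize degree $k-1$, yielding an element $\tilde{\beta} = \iota(\eta)$ with $\eta \in \mathrm{CF}_{k-1}(H_{t})$. The claim is $\ell(\iota(\zeta) + d\tilde{\beta}) \le \ell(\iota(\zeta) + d\beta)$. The discarded tail $\beta - \tilde{\beta}$ splits into a part with Novikov exponents outside $\omega(\pi_{2}(M))$ and a part whose cappings shift $e_{i}$ into a degree $\ne k-1$; both contribute to $d(\beta-\tilde\beta)$ only in basis directions that are orthogonal to $\iota(\zeta) + d\tilde{\beta}$, which has $\Lambda_{\omega}$-coefficients and lies in degree $k$. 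By the orthogonality of the basis $\{\iota(e_{i})\}$, the nonarchimedean filtration of $\iota(\zeta) + d\beta$ is the maximum of $\ell(\iota(\zeta) + d\tilde{\beta})$ and $\ell(d(\beta-\tilde\beta))$, whence $\ell(\iota(\zeta) + d\tilde{\beta}) \le \ell(\iota(\zeta) + d\beta)$ as desired.

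The main obstacle is verifying the orthogonality claims rigorously, particularly the interaction between the filtration $\ell$, the Chern-class-based grading on $\mathrm{CF}(H_t)$, and the period completions implicit in the definitions of $\mathrm{CF}(H_t)$ and $\mathrm{CF}(H_t;\Lambda)$. A conceptually cleaner alternative is to appeal directly to the singular value (barcode) decomposition of \cite{usher-zhang-GT-2016}: the barcode of $(\mathrm{CF}(H_{t}), d, \ell)$ as a filtered $\Lambda_{\omega}$-complex is preserved under base change to $\Lambda$, and the infimum distance from $\zeta$ to the image of $d|_{\mathrm{CF}_{k-1}(H_{t})}$ is an invariant determined by this barcode, so it agrees with the corresponding distance computed after extending coefficients.
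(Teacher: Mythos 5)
Your main argument is essentially the paper's proof: the trivial inequality via $\beta=\iota(\eta)$, then the decomposition of an arbitrary $\beta$ into the part whose Novikov exponents are areas of degree-$(k-1)$ cappings (which is $\iota(\eta)$ for some $\eta\in\mathrm{CF}_{k-1}(H_t)$) plus a complementary piece, the observation that the differential preserves the complementary piece (because an index-one Floer cylinder glues a capping to a capping, shifting degree by one), and the no-cancellation/orthogonality argument to conclude $\ell(\iota(\zeta)+d\beta)\ge\ell(\iota(\zeta)+d\tilde\beta)=\ell(\zeta+d\eta)$. The paper phrases this by introducing the subspace $\Pi_k$ of sums of terms $\tau^a\gamma$ with $a$ not the area of a degree-$k$ capping and proving $d\Pi_{k-1}\subset\Pi_k$; your ``truncation'' is the same decomposition.

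Two caveats. First, the $\Lambda_{\omega}$ base-change framing is not quite right: cappings are equivalence classes modulo spheres with zero area \emph{and} zero Chern number, so $\mathrm{CF}(H_t)$ is free over the Novikov ring of $\pi_2(M)/(\ker\omega\cap\ker c_1)$, not over $\Lambda_\omega=\Lambda_{\omega(\pi_2)}$ on one generator per orbit, and $\iota$ can collapse distinct cappings of equal area but different degree. The argument survives because it only ever uses the partition of exponents into ``areas of degree-$(k-1)$ cappings'' versus the rest, which is exactly what the paper isolates; you do not need (and should not assert) that $\mathrm{CF}(H_t;\Lambda)$ is a completed tensor product over $\Lambda_\omega$. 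Second, the proposed ``cleaner alternative'' via invariance of the barcode under base change is not a shortcut: the identification of $\mathrm{CF}(H_t;\Lambda)$ with a base change of the graded piece is precisely what fails a priori (for the reason just given), and the claim that the distance-to-boundaries computed in degree $k$ agrees with the one computed over $\Lambda$ is the content of the lemma itself, so this route is circular as stated. Stick with the first argument, stripped of the $\Lambda_\omega$ language.
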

\begin{proof}
  It suffices to prove the $\ge$ inequality. Let $\Pi_{k}\subset \mathrm{CF}(H_{t};\Lambda)$ be the subspace over $\Z/2$ of semi-infinite sums spanned by terms $\tau^{a}\gamma$ satisfying:

  \begin{enumerate}[label=$\bullet$]
  \item $a$ is \emph{not} the symplectic area of capping $v$ so $\deg(\gamma,v)=k$.
  \end{enumerate}

  We claim that $d\Pi_{k-1}\subset\Pi_{k}$. Indeed, if $\tau^{a+b}\gamma_{+}$ appears in the output of $d(\tau^{a}\gamma_{-})$ then the there is an index $1$ Floer cylinder of area $b$ joining $\gamma_{-}$ to $\gamma_{+}$; this observation proves the claim.

  Then any element $\beta$ can be decomposed as $\beta=\iota(\eta)+\pi$ where $\pi\in \Pi_{k-1}$. We compute:
  \begin{equation*}
    \ell(\iota(\zeta)+d \beta)=\ell(\iota(\zeta+d\eta)+d\pi)\ge \ell(\iota(\zeta+\d\eta))=:\ell(\zeta+d\eta).
  \end{equation*}
  The $\ge$ inequality follows since $\iota(\zeta+\d\eta)$ and $\Pi_{k}$ are \emph{orthogonal}. Indeed, let us consider any term $\tau^{a}\gamma$ which optimizes $\ell(\zeta+d\eta)$. This term cannot be cancelled by any term appearing in $d\pi$, by definition. Hence $\tau^{a}\gamma$ still appears in $\iota(\zeta+\d\eta)+\d\pi$ with non-zero coefficient, and the desired inequality follows. This completes the proof.
\end{proof}

\subsection{Spectral invariant of the unit}
\label{sec:pss-spectr-invar}

First suppose that $M$ is compact. Recall from Proposition \ref{prop:PSS-e} the cycle $\zeta\in \mathrm{CF}(H_{t})$ obtained by counting rigid PSS cylinders. This cycle lies in the $2n$ graded piece of $\mathrm{CF}(H_{t})$. Then:
\begin{equation*}
  c([M],H_{t})=\inf\set{\ell(\iota(\zeta)+d\beta):\beta\in \mathrm{CF}(H_{t};\Lambda)}
\end{equation*}
is an invariant of the (generic) system $H_{t}$ and the complex structure $J$.

Some care is needed when $M$ is open, in which case $M$ is assumed to be semi-convex (has a non-compact end modelled on $S_{+}Y\times T$). In this case, we require that $H_{t}$ has a \emph{split equivariant negative ideal restriction}. This means that, in the non-compact end, the system generated by $H_{t}$ is:
\begin{enumerate}
\item \emph{split}, i.e., of the form $(\varphi_{t},\phi_{t})$ where $\varphi_{t},\phi_{t}$ are Hamiltonian systems on $S_{+}Y$ and $T$, respectively,
\item \emph{equivariant}, i.e., $\varphi_{t}$ is equivariant under the Liouville flow on $S_{+}Y$,
\item \emph{negative}, i.e., the system $\varphi_{t}$ is generated by an asymptotically negative Hamiltonian function.
\end{enumerate}

Assuming equivariance, negativity is equivalent to requiring that the ideal restriction of $\varphi_{t}$ (a contact isotopy on $Y$) is a negative path in the contactomorphism group.

In this case, one can prove an energy estimate for solutions to the PSS equation (this is not the case if $\varphi_{t}$ has a positive ideal restriction). Continuing to assume that the system generated by $H_{t}$ has non-degenerate $1$-periodic orbits, one obtains a maximum principle and the compactness up-to-breaking of the moduli space of solutions to the PSS equation, following, e.g., \cite{brocic-cant-JFPTA-2024,alizadeh-atallah-cant-arXiv-2023}. The upshot of this discussion is that the PSS cycle $\zeta$ and the spectral invariant $c([M],H_{t})$ can be defined as in the compact case.

Finally, one extends the definition of $c([M],H_{t})$ to systems where $\varphi_{t}$ is \emph{non-positive} by a limiting process, approximating $\varphi_{t}$ by negative systems. The details in the case when $T=\mathrm{pt}$ and $\varphi_{t}=\id$ can be found in \cite{alizadeh-atallah-cant-arXiv-2023} and the general case follows from the same argument.

\subsection{A product formula for spectral invariants}
\label{sec:prod-form-spectr}

Let $M$ be strongly semipositive and semiconvex. As above, let the non-compact end of $M$ be modelled on $S_{+}Y\times T$. Then $M\times T^{2}$ is semipositive and semiconvex; the non-compact end of $M$ is modelled on $S_{+}Y\times (T\times T^{2})$.

Let $H_{t}$ be a system on $M$ whose flow is split, equivariant, and non-positive, so that $c([M],H_{t})$ is defined by the above procedure. Let $K_{t}$ be any Hamiltonian system on $T^{2}$. Then: $$H_{t}\circ \pi_{1}+K_{t}\circ \pi_{2}$$ is split, equivariant, and non-positive on $M\times T^{2}$ with respect to the non-compact end $S_{+}Y\times T\times T^{2}$. The goal in this section is to prove:
\begin{theorem}[see {\cite[Theorem 5.1]{entov-polterovich-compositio-2009}}]
  The spectral invariants satisfy:
  \begin{equation*}
    c([M\times T^{2}],H_{t}\circ \pi_{1}+K_{t}\circ \pi_{2})=c([M],H_{t})+c([T^{2}],K_{t}),
  \end{equation*}
  where $H_{t},K_{t}$ are as above.
\end{theorem}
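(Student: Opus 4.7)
The plan is to leverage the nonarchimedean framework from the appendix and reduce the product formula to a purely algebraic Künneth-type statement for orthogonalizable filtered chain complexes over $\Lambda$, in the spirit of Usher--Zhang.

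First, I would set up a tensor product decomposition at the chain level. Fix a split almost complex structure $J = J_{1}\oplus J_{2}$ on $M\times T^{2}$ (perturbed generically within the split class so as to retain transversality of moduli spaces of simple curves; the asphericity of $T^{2}$ means only $J_{1}$-spheres need be worried about, and semipositivity of $M$ handles those). The $1$-periodic orbits of the split Hamiltonian $H_{t}\circ\pi_{1}+K_{t}\circ\pi_{2}$ are pairs $(\gamma,\eta)$ of orbits of $H_{t}$ and $K_{t}$, with actions adding. Using the split $J$, finite-energy Floer cylinders into $M\times T^{2}$ decompose as pairs $(u_{1},u_{2})$ of Floer cylinders into $M$ and $T^{2}$, with symplectic areas adding. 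This yields an isomorphism of filtered $\Lambda$-chain complexes
\begin{equation*}
\mathrm{CF}(H_{t}\oplus K_{t};\Lambda)\;\cong\;\mathrm{CF}(H_{t};\Lambda)\otimes_{\Lambda}\mathrm{CF}(K_{t};\Lambda),
\end{equation*}
with the filtration on the right given on simple tensors of the form $\tau^{a_{1}}\gamma\otimes\tau^{a_{2}}\eta$ by the sum of the filtration values in each factor. Tensoring orthogonal bases of the factors produces an orthogonal basis of the tensor product, so this complex is itself orthogonalizable in the sense of \cite{usher-zhang-GT-2016}.

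Second, I would verify that the PSS cycle of the product system equals the tensor product of PSS cycles: $\zeta_{H\oplus K}=\zeta_{H}\otimes\zeta_{K}$. The same splitting argument as for the differential applies: solutions to the PSS equation for a split system with split $J$ and a single cut-off profile $\beta(s)$ factor as pairs of PSS half-cylinders in each factor, with matching energy bounds coming from the split maximum principle in the semiconvex end $S_{+}Y\times T\times T^{2}$. The grading normalization in \eqref{eq:degree} was chosen so that $\zeta_{H\oplus K}$ lies in the top graded piece, and this top piece is the tensor product of the top graded pieces, so the factorization makes sense and counts rigid configurations compatibly.

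The last, and most essential, step is the algebraic Künneth formula for nonarchimedean spectral values. Having identified the spectral invariant, via the extension-of-coefficients lemma of the appendix, as
\begin{equation*}
c([M],H_{t})=\inf_{\beta_{1}}\ell\bigl(\iota(\zeta_{H})+d\beta_{1}\bigr),
\end{equation*}
and similarly for $K_{t}$ and for the product, the desired equality reduces to showing that for orthogonalizable filtered complexes $(C_{1},\ell_{1}),(C_{2},\ell_{2})$ over $\Lambda$ and cycles $z_{i}\in C_{i}$,
\begin{equation*}
\inf_{\beta}\ell\bigl(z_{1}\otimes z_{2}+d\beta\bigr)\;=\;\inf_{\beta_{1}}\ell_{1}(z_{1}+d\beta_{1})+\inf_{\beta_{2}}\ell_{2}(z_{2}+d\beta_{2}).
\end{equation*}
The $\le$ direction is easy: given near-optimal $\beta_{1},\beta_{2}$ one writes $z_{1}\otimes z_{2}+d(\beta_{1}\otimes z_{2}+(z_{1}+d\beta_{1})\otimes \beta_{2})=(z_{1}+d\beta_{1})\otimes(z_{2}+d\beta_{2})$, whose filtration is additive on pure tensors because the tensor basis is orthogonal. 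The $\ge$ direction is the hard part and is exactly the content (in the persistence-module language) of the stability of barcodes under tensor product. I would prove it by picking an orthogonal basis decomposition in each factor compatible with the boundary map (a singular value or ``barcode'' basis in the sense of \cite{usher-zhang-GT-2016}), and then verifying directly on this basis that no $\beta$ can do better than a split correction $\beta_{1}\otimes z_{2}+z_{1}\otimes\beta_{2}$.

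The main obstacle I anticipate is the $\ge$ inequality above: achieving it cleanly requires an explicit orthogonal decomposition adapted to the differential, and one must be careful that the completions involved in the semi-infinite sums defining $\mathrm{CF}(\,\cdot\,;\Lambda)$ do not permit cancellations unavailable in finite dimensions. A secondary technical point is justifying the splitting of moduli spaces: strictly split $J$ typically fails transversality at multiply-covered curves, so one must either appeal to abstract perturbations or use the standard trick that $c([M\times T^{2}],\cdot)$ is independent of the choice of $J$ and deform to a generic $J$ through a cobordism preserving the count of rigid configurations at the level of filtered homology. With these two points handled, the theorem follows by combining the chain-level tensor decomposition, the factorization of the PSS unit, and the Künneth formula for nonarchimedean spectral values.
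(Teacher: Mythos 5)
Your proposal is correct and follows essentially the same route as the paper: a chain-level tensor decomposition for split data, factorization of the PSS unit, and the nonarchimedean K\"unneth step carried out via the singular value decomposition of \cite{usher-zhang-GT-2016}. The only difference is presentational: where you propose to verify by hand that no correction $\beta$ beats a split one, the paper invokes the results of \cite[\S8]{usher-ASENS-2013} that the tensor product of orthogonal bases is orthogonal and that $\ell$ is additive on the resulting pure tensors, which is exactly the content of your ``$\ge$'' direction.
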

The proof is based on the nonarchimedean singular value decomposition result from \cite{usher-zhang-GT-2016} and the analysis of orthogonal bases and tensor products from \cite[\S8]{usher-ASENS-2013}. The result is not new (the non-compact setting does not change things in a significant way) and follows from \cite[Theorem 5.1]{entov-polterovich-compositio-2009}; we include the argument only for completeness.

\begin{proof}
  By continuity, it suffices to prove the case when $H_{t}$ is split, equivariant, and negative (rather than non-positive). We may also assume that $H_{t}$ is generic on the compact part so that all orbits are non-degenerate and the Floer differential is well-defined (for some generic admissible complex structure $J$). We similarly pick $K_{t}$ generically so that the Floer complex is well-defined (using the standard almost complex structure on $T^{2}$).

  Abbreviate $G_{t}=H_{t}\circ \pi_{1}+K_{t}\circ \pi_{2}$, and observe that the system generated by $G_{t}$ is split with respect to the decomposition $M\times T^{2}$. In particular, every orbit of $G_{t}$ is of the form $(\gamma(t),\mu(t))$ where $\gamma,\mu$ are orbits of $H_{t},K_{t}$, respectively. This induces an isomorphism:
  \begin{equation}\label{eq:tensor-product}
    \mathrm{CF}(H_{t};\Lambda)\otimes_{\Lambda} \mathrm{CF}(K_{t};\Lambda)\to \mathrm{CF}(G_{t};\Lambda),
  \end{equation}
  satisfying $(\tau^{a}\gamma)\otimes (\tau^{b}\mu)\mapsto \tau^{a+b}(\gamma,\mu)$.

  The key analytic input is that \eqref{eq:tensor-product} is a chain map, provided one uses the split complex structure on $M\times T^{2}$, and where the differential on the left hand side of \eqref{eq:tensor-product} is $d\otimes 1+1\otimes d$. This is fairly obvious, and has been observed before in, e.g., \cite[\S5.4]{entov-polterovich-compositio-2009}. Moreover, if $\zeta(H_{t})$ and $\zeta(K_{t})$ be the PSS cycles representing the unit element, then the image of $\zeta(H_{t})\otimes \zeta(K_{t})$ under \eqref{eq:tensor-product} is the PSS cycle representing the unit in $\mathrm{CF}(G_{t};\Lambda)$.

  The rest of the proof is entirely algebraic. The first step is to appeal to the nonarchimedean singular value decomposition of \cite{usher-zhang-GT-2016}. This yields an orthogonal $\Lambda$-basis:
  \begin{equation*}
    \set{Z_{1},\dots,Z_{N},T_{1},\dots,T_{M},S_{1},\dots,S_{M}}
  \end{equation*}
  for $\mathrm{CF}(H_{t};\Lambda)$ such that:
  \begin{enumerate}
  \item $T_{1},\dots,T_{M}$ is a basis for $\mathrm{im}(d)$,
  \item $Z_{1},\dots,Z_{N}$ projects to a basis for $\ker(d)/\mathrm{im}(d)$,
  \item $dS_{i}=T_{i}$.
  \end{enumerate}
  One similarly concludes a basis $\set{Z_{1}',\dots,T_{1}',\dots,S_{1}',\dots}$ for $\mathrm{CF}(K_{t};\Lambda)$.

  Write:
  \begin{equation*}
    \zeta(H_{t})=\xi(H_{t})+d\beta(H_{t})\text{ and }\zeta(K_{t})=\xi(K_{t})+d\beta(K_{t}),
  \end{equation*}
  where $\xi(H_{t}),\xi(K_{t})$ lie in the span of the $Z,Z'$ vectors respectively. It follows from orthogonality of the bases that:
  \begin{equation}\label{eq:carriers}
    c([M],H_{t})=\ell(\xi(H_{t}))\text{ and }c([T^{2}],K_{t})=\ell(\xi(K_{t}));
  \end{equation}
  in other words, the $\xi$ cycles are \emph{carriers} of the spectral invariants.

  The next step is to appeal to the results on tensor products of orthogonalizable $\Lambda$-vector spaces from \cite[\S8]{usher-ASENS-2013}. There it is proven (in a more general context) that:
  \begin{equation}\label{eq:formula-tp}
    \ell(\textstyle\sum_{i,j} \lambda_{i}\lambda_{j}'Z_{i}\otimes Z_{j}')=\max_{i,j}\set{\ell(\lambda_{i}Z_{i})+\ell(\lambda_{j}Z_{j}')},
  \end{equation}
  where we abuse notation and let $Z_{i}\otimes Z_{j}$ denote its image under \eqref{eq:tensor-product}. It is also shown that the tensor product of orthogonal bases is sent to an orthogonal basis.

  Thus we conclude that:
  \begin{equation*}
    \zeta(H_{t})\otimes \zeta(K_{t})=\xi(H_{t})\otimes \xi(K_{t})+d\beta.
  \end{equation*}
  We then conclude:
  \begin{equation*}
    \begin{aligned}
      c([M\times T^{2}],G_{t})
      &=\inf\set{\ell(\zeta(H_{t})\otimes \zeta(K_{t})+d\beta):\beta\in \mathrm{CF}(G_{t};\Lambda)}\\
      &=\inf\set{\ell(\xi(H_{t})\otimes \xi(K_{t})+d\beta):\beta\in \mathrm{CF}(G_{t};\Lambda)}\\
      &=\ell(\xi(H_{t})\otimes\xi(K_{t}))\\
      &=\ell(\xi(H_{t}))+\ell(\xi(K_{t}))\\
      &=c([M],H_{t})+c([T^{2}],K_{t}),
    \end{aligned}
  \end{equation*}
  where we have used the orthogonality of the tensor product basis in the third equality, \eqref{eq:formula-tp} in the fourth equality, and \eqref{eq:carriers} in the final equality. This completes the proof.
\end{proof}

\subsection{On the rationality assumption in continuity arguments}
\label{sec:rati-assumpt-cont}

In the proof of Theorem \ref{theorem:1} (in particular, Lemma \ref{lemma:technical-estimate}) we appealed to rationality to conclude that spectral invariants are constant along a deformation provided the contractible orbits and their actions are constant. The reasoning is that the spectral invariants are continuous and valued inside the spectrum of the system. However, this is only known to hold in general when the symplectic manifold is rational, or the systems remain non-degenerate.

The following shows that one can drop the rationality assumption in the proof of Lemma \ref{lemma:technical-estimate}.
\begin{lemma}\label{lemma:irrational-stability}
  Let $H_{s,t}$ be a family of Hamiltonian functions which are split, equivariant, and negative in the non-compact end of a semiconvex and semipositive symplectic manifold $M$. Suppose that the set of contractible orbits of $H_{s,t}$ is independent of $s$, and the values of $H_{s,t}$ in a neighborhood containing these orbits are also independent of $s$. Then it holds that:
  \begin{equation*}
    c([M],H_{0,t})=c([M],H_{1,t}).
  \end{equation*}
\end{lemma}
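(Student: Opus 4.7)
The plan rests on the elementary observation that a continuous function $[0,1]\to \R$ whose image lies in a countable subset of $\R$ must be constant, since a connected countable subset of $\R$ is a singleton. So I would try to show that $s\mapsto c([M],H_{s,t})$ is continuous and takes values in a fixed countable set, namely the action spectrum.

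I would first handle the non-degenerate case. For each capped contractible orbit $(\gamma,v)$, the action $-\omega(v)+\int_0^1 H_{s,t}(\gamma(t))\,dt$ is $s$-independent by the hypothesis that $H_{s,t}$ agrees with $H_{0,t}$ on a neighborhood of $\gamma$. Since the orbit set is also $s$-independent, the action spectrum is a fixed subset of $\R$, and it is countable because each of the finitely many orbits has cappings enumerated (modulo zero-area zero-Chern-number spheres) by the countable group $\pi_{2}(M)/(\ker\omega\cap \ker c_{1})$. Spectrality implies $c([M],H_{s,t})$ lies in this fixed countable set, and Hofer continuity of $c$ in $s$ makes $s\mapsto c([M],H_{s,t})$ continuous, so it is constant.

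For the possibly degenerate case, I would approximate by non-degenerate systems while preserving the $s$-independence of the orbit picture. Pick an open neighborhood $V$ of the common orbit set on which $H_{s,t}$ is $s$-independent, and let $U$ be a smaller neighborhood of the orbit set with $\overline{U}\subset V$. Choose a generic $C^{2}$-small time-dependent $F_{t}$ supported in $U$ so that, for small $\epsilon>0$, $H_{0,t}+\epsilon F_{t}$ has only non-degenerate $1$-periodic orbits near the original orbit set. Since $H_{s,t}+\epsilon F_{t}=H_{0,t}+\epsilon F_{t}$ on $U$, the perturbed family $H_{s,t}^{\epsilon}:=H_{s,t}+\epsilon F_{t}$ has an $s$-independent set of non-degenerate orbits inside $U$. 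A standard compactness argument, using semiconvexity and the negative ideal restriction to confine orbits to a fixed compact region together with $C^{1}$ subconvergence of loops, shows that for $\epsilon$ sufficiently small, $H_{s,t}^{\epsilon}$ has no contractible orbits outside $U$, uniformly in $s\in [0,1]$; otherwise a sequence of orbits of $H_{s_{k},t}^{\epsilon_{k}}$ would subconverge to a $1$-periodic orbit of some $H_{s_{\infty},t}$ outside the hypothesized common orbit set, a contradiction. The non-degenerate case then gives $c([M],H_{0,t}^{\epsilon})=c([M],H_{1,t}^{\epsilon})$, and sending $\epsilon\to 0$ with Hofer continuity of $c$ concludes the proof.

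The main obstacle will be the uniform compactness argument excluding new orbits outside $U$ for the perturbed family: the perturbation has to be fixed in advance without knowing where new orbits could form, and the exclusion must hold uniformly in $s\in [0,1]$. This is precisely where the structure of the non-compact end (split, equivariant, negative) enters, providing the a priori confinement of orbits to a fixed compact region that legitimizes the $C^{1}$ subconvergence argument.
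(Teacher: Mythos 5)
Your proposal is correct and takes essentially the same route as the paper's (very terse) proof: perturb the family in an $s$-independent way near the common orbit set to achieve non-degeneracy, then combine the non-degenerate spectrality of \cite{usher-compositio-2008} with continuity in $s$ and the fact that the resulting $s$-independent action spectrum is countable, hence totally disconnected. The details you supply --- countability of the spectrum via cappings, and the uniform compactness argument excluding new orbits outside the perturbation region for small $\epsilon$ --- are exactly the steps the paper leaves implicit, and they are handled correctly.
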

\begin{proof}
  The shortest argument is to perturb $H_{s,t}$ \emph{in an $s$-independent way} in a neighborhood of the contractible orbits, so that each contractible orbit of $H_{s,t}$ becomes non-degenerate. Then one can appeal to the non-degenerate spectrality proved in \cite{usher-compositio-2008}. This completes the proof.

  An alternative argument is decompose the continuation: $$\mathrm{HF}(H_{0,t})\to \mathrm{HF}(H_{1,t})$$ into a composition of many continuation maps, and prove (under the hypotheses of the lemma) that such a continuation map preserves the action of any cycle made of contractible orbits. We leave the details of such an approach to the reader.
\end{proof}

\bibliographystyle{alpha}
\bibliography{citations}
\end{document}